\def\BState{\State\hskip-\ALG@thistlm}
\newtheorem{thm}{Theorem}
\newtheorem{cor}[thm]{Corollary}
\newtheorem{lem}[thm]{Lemma}
\newenvironment{proof}[1][Proof]
{\par\noindent{\bf #1.} }{\hspace*{\fill}\nolinebreak{$\Box$}\bigskip\par}
\title{\bf On some three color Ramsey numbers for paths, cycles, stripes and
stars}
\author{
\Large Farideh Khoeini \\
\small Department of Mathematical Sciences \\[-0.8ex]
\small Isfahan University of Technology \\[-0.8ex]
\small Isfahan, Iran \\[-0.8ex]
\small {\tt f.khoeini@math.iut.ac.ir}
\\
and\\[-2ex]
\\
\Large Tomasz Dzido\\
\small Institute of Informatics \\[-0.8ex] 
\small Faculty of Mathematics, Physics and Informatics \\[-0.8ex]
\small University of Gda\'{n}sk \\[-0.8ex]
\small 80-308 Gda\'{n}sk, Poland\\[-0.8ex]
\small {\tt tdz@inf.ug.edu.pl}\\
}
\begin{document}

\maketitle
\thispagestyle{empty}

\vspace{-0.5cm}
\begin{abstract}
For given graphs $G_{1}, G_{2}, ... , G_{k}, k \geq 2$, the \emph{multicolor Ramsey number} $R(G_{1}, G_{2}, ... , G_{k})$ is the smallest integer $n$ such that if we arbitrarily color the edges of the complete graph of order $n$
with $k$ colors, then it always contains a monochromatic copy of
$G_{i}$ colored with $i$, for some $1 \leq i \leq k$. The bipartite Ramsey number $b(G_1, \cdots, G_k)$ is the least positive integer $b$ such that any coloring of the edges of $K_{b,b}$ with $k$ colors will result in a monochromatic copy of bipartite $G_i$ in the $i$-th color, for some $i$, $1 \le i \le k$.  

\medskip 
There is very little known about $R(G_{1},\ldots, G_{k})$ even for very special graphs, there are a lot of open cases (see \cite{15}). In this paper, by using bipartite Ramsey numbers we obtain the exact values of some multicolor Ramsey numbers. We show that for sufficiently large $n_{0}$ and three following cases:
\begin{itemize}
\item [1.] $n_{1}=2s$, $n_{2}=2m$ and $m-1<2s$,
\item [2.] $n_{1}=n_{2}=2s$,
\item [3.] $n_{1}=2s+1$, $n_{2}=2m$ and $s<m-1<2s+1$, we have
\end{itemize} 
$$R(C_{n_0}, P_{n_{1}},P_{n_{2}}) = n_0 + \Big \lfloor \frac{n_1}{2} \Big \rfloor + \Big \lfloor \frac{n_2}{2} \Big \rfloor -2.$$

\medskip 
We prove that $R(P_n,kK_{2},kK_{2})=n+2k-2$ for large $n$.  In addition, we prove that for even $k$, $R((k-1)K_{2},P_{k},P_{k})=3k-4$. For $s < m-1<2s+1$ and $t\geq m+s-1$, we obtain that $R(tK_{2},P_{2s+1},P_{2m})=s+m+2t-2$
where $P_{k}$ is a path on $k$ vertices and $tK_{2}$ is a matching of size $t$.

\medskip
We also provide some new exact values or generalize known results for other multicolor Ramsey numbers of paths, cycles, stripes and stars versus other graphs. 
\end{abstract}

\noindent
{\bf AMS subject classification:} 05C55, 05C35

\noindent
{\bf Keywords:} Ramsey number, bipartite Ramsey number, cycle, path, star, stripe.

\bigskip
\section{Introduction}

All graphs in this paper are undirected, finite and simple. The union of two graphs $G$ and $H$, denoted by $G\cup H,$ is a graph with vertex set $V(G\cup H)$ and edge set $E(G)\cup E(H)$. The join of two graphs $G$ and $H$, denoted by $G+H,$ is a graph with vertex set $V(G\cup H)$ and edge set $E(G\cup H) \cup \lbrace uv \vert u\in G, v\in H\rbrace$.  The union of $k$ disjoint copies of the same graph $G$ is denoted by $kG$. $\overline{G}$ stands for the complement of the graph $G$.  We denote by $G[U]$ the subgraph of $G$ induced by the vertex set $U$. By $P_n$ and $C_n$ we denote the path and cycle on $n$ vertices. For a $3$-edge coloring (say blue, red and green) of a graph $G$, we denote by $G^{b}$ (resp. $G^{r}$ and $G^{g}$) the induced subgraph by the edges of color blue (resp. red and green). 

\medskip
For given graphs $G_{1}, G_{2}, ... , G_{k}, k \geq 2$, the \emph{multicolor Ramsey number} $R(G_{1}, G_{2}, ... , G_{k})$ is the smallest integer $n$ such that if we arbitrarily color the edges of the complete graph of order $n$ with $k$ colors, then it always contains a monochromatic copy of
$G_{i}$ colored with $i$, for some $1 \leq i \leq k$.  The existence of such a positive integer is guaranteed by Ramsey's classical result \cite{1}. 
The bipartite Ramsey number $b(G_1, \cdots, G_k)$ is the least positive integer $b$ such that any coloring of the edges of $K_{b,b}$ with $k$ colors will result in a monochromatic copy of bipartite $G_i$ in the $i$-th color, for some $i$, $1 \le i \le k$. 

\medskip
There is very little known about $R(G_{1},\ldots, G_{k})$ even for very special graphs, there are a lot of open cases (see \cite{15}). In this paper, by using bipartite Ramsey numbers we obtain a proof of the exact values of some three-color Ramsey numbers. 

\medskip
In 2006 Dzido \emph{et al.} \cite{24} proved that $R(P_4,P_4,C_n)=n+2$ and $R(P_3,P_5,C_n)=n+1$. In subsection 3.1 we generalize their results  
and show that for sufficiently large $n_{0}$ and three following cases:
\begin{itemize}
\item [1.] $n_{1}=2s$, $n_{2}=2m$ and $m-1<2s$,
\item [2.] $n_{1}=n_{2}=2s$,
\item [3.] $n_{1}=2s+1$, $n_{2}=2m$ and $s<m-1<2s+1$, we have
\end{itemize} 
$$R(C_{n_0}, P_{n_{1}},P_{n_{2}}) = n_0 + \Big \lfloor \frac{n_1}{2} \Big \rfloor + \Big \lfloor \frac{n_2}{2} \Big \rfloor -2.$$

\noindent For the Ramsey number of paths a well-known theorem of Gerencs\'{e}r and Gy\'{a}rf\'{a}s \cite{17} states that $R(P_{n},P_{m})=m+\lfloor n/2 \rfloor -1$ where $m \geq n \geq  2$. In 1975 \cite{6} these authors determined $R(P_{n_1},P_{n_2},P_{n_3})$ for the case $n_{1}\geq 6(n_{2}+n_{3})^{2}$
and they conjectured that 
\begin{equation*}
R(P_{n},P_{n},P_{n}) = \left\{
\begin{array}{rl}
2n-2 & \text{if } n \text{ is even,}\\
2n-1 & \text{if } n \text{ is odd.}
\end{array} \right.
\end{equation*}
In 2007 this conjecture was established by Gy\'{a}rf\'{a}s \emph{et al.}  \cite{18} for sufficiently large $n$. We can apply the result for $R(C_{n_0}, P_{n_{1}},P_{n_{2}})$ to $P_{n_0}$ instead of $C_{n_0}$ to obtain the same result as in \cite{6}.   

\medskip
The Ramsey number of star versus path was completely determined by Parsons \cite{2}. In subsections 3.3 and 3.4 we investigate multicolor Ramsey number of a cycle $C_n$ or a path $P_n$ versus stars and strips for large value of $n$. 

\medskip
In \cite{5} Maherani \emph{et al.} proved that $R(P_{3}, kK_{2}, nK_{2})=2k+n-1$  for $k\geq n\geq 3$.  In this paper we show that $R(P_n,kK_{2},kK_{2})=n+2k-2$ for large $n$.  In addition we prove that for even $k$, $R((k-1)K_{2},P_{k},P_{k})=3k-4$. For $s < m-1<2s+1$ and $t\geq m+s-1$, we obtain that $R(tK_{2},P_{2s+1},P_{2m})=s+m+2t-2.$

\medskip
We also provide some new exact values or generalize known results for other multicolor Ramsey numbers of paths, cycles, stripes and stars versus other graphs.

\bigskip
\section{Main results}

\begin{thm}\label{SS}
For every graph $H$ and bipartite graphs $G_1, \ldots, G_k$, we have
\[R(H,G_1,\ldots,G_k)\leq R(H, K_{b,b}),\]
where $b=b(G_1, \ldots, G_k)$. 
\end{thm}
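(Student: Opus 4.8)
The plan is to reduce the $(k+1)$-color problem to a $2$-color problem by merging the $k$ colors associated with the bipartite graphs $G_1, \ldots, G_k$ into a single color. Set $N = R(H, K_{b,b})$ and consider an arbitrary edge-coloring of $K_N$ with $k+1$ colors $0, 1, \ldots, k$, where color $0$ is reserved for $H$ and color $i$ for $G_i$. I would define an auxiliary $2$-coloring of $K_N$ in which an edge keeps the label \emph{first} if it had color $0$, and receives the label \emph{second} if its original color lies in $\{1, \ldots, k\}$.

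By the definition of $N = R(H, K_{b,b})$, this auxiliary $2$-coloring must contain either a copy of $H$ in the first color or a copy of $K_{b,b}$ in the second color. In the former case, the copy of $H$ uses only edges of original color $0$, so we have found the required monochromatic $H$ and are done.

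In the latter case, we obtain a complete bipartite subgraph $K_{b,b}$ all of whose edges have original colors in $\{1, \ldots, k\}$. Restricting the original coloring to this $K_{b,b}$ yields a $k$-coloring of its edges with colors $1, \ldots, k$. Since $b = b(G_1, \ldots, G_k)$, the definition of the bipartite Ramsey number guarantees a monochromatic copy of $G_i$ in color $i$ for some $1 \le i \le k$ inside this $K_{b,b}$. Either way, every $(k+1)$-coloring of $K_N$ produces one of the desired monochromatic subgraphs, which establishes $R(H, G_1, \ldots, G_k) \le N = R(H, K_{b,b})$.

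The argument is essentially a single color-merging (pigeonhole) step, so I do not anticipate a genuine obstacle. The only point requiring care is the bookkeeping that a \emph{second}-color copy of $K_{b,b}$ is precisely a $K_{b,b}$ whose edges are colored by the $k$ colors $1, \ldots, k$, so that the bipartite Ramsey guarantee transfers verbatim to the induced coloring; verifying that this restriction is well-defined and that the two definitions align is the entire content of the proof.
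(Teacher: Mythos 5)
Your proof is correct and follows essentially the same route as the paper: merge the $k$ colors reserved for $G_1,\ldots,G_k$ into one auxiliary color, invoke $R(H,K_{b,b})$ to find either a monochromatic $H$ or a $K_{b,b}$ avoiding the first color, and then apply the definition of $b(G_1,\ldots,G_k)$ to the induced $k$-coloring of that $K_{b,b}$. Your write-up is in fact slightly cleaner in its case analysis and color indexing than the paper's version.
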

\begin{proof}
Assume $R(H, K_{b,b})=n$, we will show that for any coloring of the edges of the complete graph $K_{n}$ by $k+1$ colors there exists a color $i$ for which the corresponding color class contains $G_{i}$ as a subgraph. 
 
Suppose that $G=K_{n}$ is $k+1$-edge colored such that $G$ does not contain $H$ of color $1$.  We show that there is a copy of $G_{i}$ of color $i$ in $G$ for some $2\leq i\leq k+1$. Now we merge $k$ colors classes $2, \ldots, k+1$. Suppose that new class is black. Since $R(H, K_{b,b})=n$, we have a black copy of $K_{b,b}$. By coming back to its original $1, \ldots, k$ coloring, we get that there exists a complete bipartite subgraph $H=K_{b,b}$ where its edges are colored with $2,\ldots,k+1$ (observe that there is no edges of color $1$ in $H$). Thus we use $b=b(G_1, \ldots, G_k)$  which guarantees that $K_{b,b}$ contains a copy of $G_{i}$ of color $i$ for some $2\leq i\leq k+1$.
\end{proof} 

The following theorems appear in \cite{3} and \cite{30}, respectively.
\begin{thm}[\cite{3}]\label{o}
$R(P_{m},K_{n,k})\leq k+n+m-2.$
\end{thm}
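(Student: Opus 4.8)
The plan is to prove the equivalent coloring statement: if the edges of $K_N$ with $N=n+k+m-2$ are $2$-colored (red/blue) and there is no red $P_m$, then a blue $K_{n,k}$ must appear. I would argue by induction on $m$. The base case $m=2$ is immediate, since a red $P_2$ is a single red edge, so the absence of a red $P_2$ means every edge is blue and $K_N=K_{n+k}\supseteq K_{n,k}$. For the inductive step, let $P=v_1v_2\cdots v_\ell$ be a longest red path, so $\ell\le m-1$. If in fact $\ell\le m-2$, then the red graph contains no $P_{m-1}$ at all; since $N\ge n+k+(m-1)-2$, the induction hypothesis applied to the pair $(P_{m-1},K_{n,k})$ already forces a blue $K_{n,k}$. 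Hence I may assume the longest red path has exactly $m-1$ vertices.

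Set $W=V(K_N)\setminus V(P)$, so that $|W|=N-(m-1)=n+k-1$. Maximality of $P$ forces both endpoints $v_1$ and $v_{m-1}$ to have no red neighbour in $W$ (such an edge would extend $P$ to a red $P_m$), so each of $v_1,v_{m-1}$ is blue-complete to $W$. This settles the clean sub-case: if $W$ spans no red edge, then $W$ is a blue clique, and appending $v_1$ yields a blue $K_{n+k}\supseteq K_{n,k}$. The general strategy is to iterate this peeling. If I can exhibit $j$ vertices of $V(P)$ that are all blue-complete to $W$, it then suffices to find a blue $K_{n-j,k}$ inside $W$ and attach those $j$ vertices to its part of size $n-j$. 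Crucially the vertex budget is exactly right: since $|W|=n+k-1=(n-j)+k+(j+1)-2$, the induction hypothesis for $(P_{j+1},K_{n-j,k})$ produces the needed blue $K_{n-j,k}$ the moment the red graph on $W$ has no $P_{j+1}$.

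The crux, and the step I expect to be the main obstacle, is to couple these two quantities: the number of path-vertices that can be made blue-complete to $W$, and the length of the longest red path inside $W$. A long red path inside $W$ is precisely what threatens to recombine with $P$ into a red $P_m$, whereas many blue-complete path-vertices are precisely what the induction consumes. I would control both at once with a P\'{o}sa-type rotation argument on $P$: rotating about red chords of $P$ either lets me reroute and splice in a red path from $W$, lengthening $P$ to a forbidden red $P_m$, or it certifies that a large set $T\subseteq V(P)$ of rotation-endpoints is blue-complete to $W$, supplying enough vertices to close the induction. Making the bookkeeping of rotation-endpoints against the path length inside $W$ line up exactly with the budget above is the delicate part.

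Two auxiliary reductions would streamline the analysis and serve as fallbacks. First, the disconnected case is easy: all edges between distinct red components are blue, and a greedy subset-sum over the component sizes partitions the vertices into two unions of components of sizes at least $n$ and at least $k$ whenever every component has at most $m-2$ vertices, giving a blue $K_{n,k}$ outright; so one may assume a single large red component. Second, I would keep in reserve the global facts that a $P_m$-free graph has a vertex of red-degree at most $m-2$ (equivalently it is $(m-2)$-degenerate, with chromatic number at most $m-1$) and, by Erd\H{o}s--Gallai, at most $(m-2)N/2$ red edges; these guarantee a seed vertex of blue-degree at least $n+k-1$ and provide an averaging alternative should the rotation bookkeeping be carried out by counting rather than by explicit rerouting.
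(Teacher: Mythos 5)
The paper does not prove this statement at all; it is quoted from H\"aggkvist [3], so your proposal has to stand on its own. The set-up is fine: the induction on $m$, the reduction to a longest red path $P$ on exactly $m-1$ vertices, the observation that the two endpoints of $P$ are blue-complete to $W=V\setminus V(P)$ with $|W|=n+k-1$, the budget identity $|W|=(n-j)+k+(j+1)-2$, and the sub-case where $W$ spans no red edge are all correct.

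The gap is exactly where you flag it, and it is not just delicate bookkeeping: the coupling you need is false. Your scheme requires, for some $j$, at least $j$ vertices of $P$ blue-complete to $W$ while the red graph on $W$ has no $P_{j+1}$; equivalently, the number of blue-complete vertices of $P$ must be at least $\min(n,p)$, where $p$ is the order of a longest red path inside $W$. Take $m=6$, $n=4$, $k=3$, $N=11$, and let the red graph be the path $v_1v_2v_3v_4v_5$ with pendant red leaves $u',u,u''$ attached to $v_2,v_3,v_4$, together with a red triangle $w_1w_2w_3$ on the remaining three vertices. The longest red path has $5=m-1$ vertices (a path can use at most two of the degree-one vertices), so there is no red $P_6$; yet for every choice of longest path $P$, only its two endpoints are blue-complete to $W$, since $v_2,v_3,v_4$ each keep a red neighbour in $W$, while $W$ contains a red $P_3$ inside the triangle. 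So $j=2$ but $\min(n,p)=3$, and the induction cannot be closed. The P\'osa rotation idea does not rescue this: $P$ has no red chords to rotate about, and the triangle is red-disconnected from $P$, so rotations produce neither new blue-complete endpoints nor a splicing into a red $P_m$. (A blue $K_{4,3}$ of course exists in this coloring --- the theorem is true --- but your argument does not find it.) The fallback reductions do not close the gap either: the subset-sum argument needs every red component to have at most $m-2$ vertices, which fails here, and a minimum-red-degree vertex only supplies $n+k-1$ blue neighbours, which is $m-2$ short of the $n-1+k+m-2$ vertices an induction on $n$ would require.
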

\begin{thm}[\cite{30}]\label{Z}
$R(tK_{2},K_{n,n})=max \lbrace n+2t-1,2n+t-1 \rbrace.$
\end{thm}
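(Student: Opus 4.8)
The plan is to prove the two inequalities $R(tK_2,K_{n,n})\ge \max\{n+2t-1,\,2n+t-1\}$ and $R(tK_2,K_{n,n})\le \max\{n+2t-1,\,2n+t-1\}$ separately, working with two colours which I call \emph{red} (the one that should produce $tK_2$) and \emph{blue} (the one that should produce $K_{n,n}$). Note first that $n+2t-1\ge 2n+t-1$ exactly when $t\ge n$, so the first term governs the matching-heavy regime $t\ge n$ and the second governs $t\le n$; this split will reappear in the upper bound.

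For the lower bound I would exhibit, for each term, a colouring of $K_{N-1}$ (with $N$ the claimed value) avoiding both a red $tK_2$ and a blue $K_{n,n}$. To get $R\ge 2n+t-1$, take $N-1=2n+t-2$ vertices, fix a set $A$ with $|A|=t-1$, colour every edge meeting $A$ red and all remaining edges blue: every red edge is incident to $A$, so the red matching number is at most $|A|=t-1$, while the blue graph is the clique on the $2n-1$ vertices outside $A$, which is too small to contain $K_{n,n}$. To get $R\ge n+2t-1$, take $N-1=n+2t-2$ vertices, choose a red clique $C$ on $2t-1$ vertices and colour everything else blue: the red graph is $K_{2t-1}$ with matching number $t-1$, and for any would-be blue $K_{n,n}$ with sides $X,Y$ a red edge of $C$ would cross whenever both $X\cap C$ and $Y\cap C$ are nonempty, forcing one side to lie in the $n-1$ vertices outside $C$, which is impossible. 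Together these give $R\ge \max\{n+2t-1,\,2n+t-1\}$.

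For the upper bound, suppose $K_N$ with $N=\max\{n+2t-1,\,2n+t-1\}$ is $2$-coloured with no red $tK_2$, so the red graph $G^r$ has maximum matching at most $t-1$. I would fix a maximum red matching $M$ of size $\nu\le t-1$; the set $U$ of $M$-unmatched vertices induces a blue clique, since any red edge inside $U$ would enlarge $M$. The goal is to produce a blue $K_{n,n}$, which amounts to finding two disjoint $n$-sets $X,Y$ with no red edge between them (their internal colours are irrelevant). The mechanism I would use is to assign each matched pair $a_ib_i$ \emph{entirely} to $X$, entirely to $Y$, or to neither, so that the matching edges never cross, and to fill the remaining room from the red-independent set $U$. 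Augmenting-path analysis on $M$ restricts the red edges from $V(M)$ to $U$: for each matched pair either one endpoint has no red neighbour in $U$, or both have a single \emph{common} red neighbour in $U$ and no other. This is what should let one route almost all matched vertices to the side opposite their few red neighbours in $U$ and reach $|X|=|Y|=n$, using $N\ge 2n$ in the relevant regime.

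The hard part will be the final distribution/counting step, not the setup. When the red matching is small, $U$ is already almost large enough and the matched vertices are easy to place; but when $\nu$ is close to $t-1$ the set $U$ shrinks to roughly the threshold size and the matched pairs must be packed onto the two sides in blocks, so one must verify that the available vertices can be split into two $n$-sets with no crossing red edge even in the extremal configurations (for instance, many disjoint red triangles each meeting $U$ in one vertex). I expect the cleanest way to control this uniformly is to replace the ad hoc augmenting-path bookkeeping by the Gallai--Edmonds decomposition of $G^r$, using the bound $N-2\nu\ge N-2(t-1)$ on the deficiency together with the factor-critical structure of the components, and then to check the two arithmetic regimes $t\ge n$ and $t\le n$ give exactly $N-2n\ge 0$ slack where it is needed. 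Matching this slack against the number of unavoidable crossing constraints is the crux on which the exactness of the bound rests.
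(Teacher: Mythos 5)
The paper does not prove this statement: it is quoted from Faudree, Schelp and Sheehan \cite{30}, so there is no in-paper argument to measure yours against, and I judge the attempt on its own terms. Your two lower-bound colourings are correct and complete: colouring red every edge meeting a fixed $(t-1)$-set caps the red matching at $t-1$ and leaves only a blue $K_{2n-1}$, which cannot contain the $2n$-vertex graph $K_{n,n}$; and a red $K_{2t-1}$ has matching number $t-1$ while forcing one side of any blue $K_{n,n}$ into the $n-1$ vertices outside the clique. These are exactly the extremal examples the formula predicts.

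The upper bound, however, is a plan rather than a proof, and the step you defer is the whole theorem. Two concrete problems. First, your augmenting-path observation only constrains red edges between $V(M)$ and the unmatched set $U$; it says nothing about red edges joining two \emph{different} matching edges, and those are precisely what can prevent you from assigning whole pairs to opposite sides (in the extremal red $K_{2t-1}$ there are no edges from $V(M)$ to $U$ at all, yet no two of its vertices may be separated). So the mechanism of routing each pair to the side opposite its red neighbours in $U$ cannot work as stated. Second, even after passing to the Gallai--Edmonds decomposition $D\cup A\cup C$ of $G^{r}$ (which is the right move: discard $A$ and use that there are no red edges between distinct components of $G^{r}[D]$, nor between $D$ and $C$), you still face the genuinely delicate step of partitioning the blocks $C,D_1,\dots,D_c$ into two groups each of total size at least $n$. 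This is not automatic from the total being at least $2n$: three blocks of size $n-1$ admit no such partition. One must invoke the factor-criticality bound $2|A|+|C|+\sum_i(d_i-1)\le 2(t-1)$ to show that at most $t-1$ components are non-singletons, that any component of size $d_1\ge n$ leaves $N-|A|-d_1\ge N-2t+1+|A|\ge n$ vertices on the other side in both regimes $t\le n$ and $t\ge n$, and that otherwise the abundance of singleton components makes a balanced split possible. Until that case analysis is written out, the inequality $R(tK_{2},K_{n,n})\le\max\lbrace n+2t-1,2n+t-1\rbrace$ has not been established.
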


It is easy to verify the following theorems by the previous Theorems \ref{SS},  \ref{o} and \ref{Z}.
\begin{thm}\label{i}
For bipartite graphs $G_1, \ldots, G_k$, we have
$$R(P_{m}, G_1,\ldots,G_k)\leq 2b+m-2,$$ 
where $b=b(G_1,\ldots,G_k)$. 
\end{thm}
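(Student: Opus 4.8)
The plan is to combine the two preceding theorems directly, with $P_m$ playing the role of the distinguished graph $H$. First I would invoke Theorem \ref{SS} taking $H=P_m$; since $G_1,\ldots,G_k$ are bipartite and $b=b(G_1,\ldots,G_k)$, this immediately yields
$$R(P_m,G_1,\ldots,G_k)\leq R(P_m,K_{b,b}).$$
The whole problem is thereby reduced to bounding the single two-color Ramsey number $R(P_m,K_{b,b})$, in which all $k$ color classes have been absorbed into one complete bipartite graph whose size $b$ is exactly the bipartite Ramsey number guaranteeing a monochromatic $G_i$.

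Next I would apply Theorem \ref{o} to the complete bipartite graph $K_{b,b}$, that is, specialize the two part sizes in that statement so that both equal $b$. This gives
$$R(P_m,K_{b,b})\leq b+b+m-2=2b+m-2.$$
Chaining the two displayed inequalities then produces the claimed bound $R(P_m,G_1,\ldots,G_k)\leq 2b+m-2$, which is precisely the assertion of the theorem.

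The argument is essentially bookkeeping, so I do not anticipate a genuine obstacle; the only point demanding care is the notational collision between the number of colors $k$ appearing in the statement of Theorem \ref{i} and the symbol used for a part size of $K_{n,k}$ in Theorem \ref{o}. I would therefore state explicitly that the substitution sets both part sizes of the complete bipartite graph equal to $b$, so that the role played by the letter in Theorem \ref{o} is filled by $b$ and is entirely independent of how many colors $G_1,\ldots,G_k$ there are. With that clarification the two cited results fit together with no further computation.
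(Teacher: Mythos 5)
Your proposal is correct and is exactly the argument the paper intends: it combines Theorem \ref{SS} with $H=P_m$ and Theorem \ref{o} specialized to $K_{b,b}$, which the authors summarize by saying the result "is easy to verify" from those theorems. Your extra remark about the notational collision with the letter $k$ is a sensible clarification but does not change the substance.
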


\begin{thm}\label{c}
For bipartite graphs $G_1, \ldots, G_k$, we have
\[
R(tK_{2}, G_1,\ldots, G_k) \leq
\begin{cases}
2b+t-1  & \text{if } t \leq b , \\
b+2t-1 & \text{if } t \geq b.
\end{cases}
\]
where $b=b(G_1, \ldots, G_k)$. 
\end{thm}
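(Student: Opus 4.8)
The plan is to derive this bound exactly as Theorem~\ref{i} was obtained, but substituting the matching $tK_2$ for the path $P_m$ and invoking Theorem~\ref{Z} in place of Theorem~\ref{o}. The whole argument is a two-step reduction followed by an elementary case split, so I expect no genuine difficulty; the theorem is, as the surrounding text advertises, an immediate consequence of the earlier results.

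First I would apply Theorem~\ref{SS} with $H = tK_2$. Since $tK_2$ is an arbitrary graph and $G_1, \ldots, G_k$ are bipartite, the hypotheses of Theorem~\ref{SS} are satisfied, and writing $b = b(G_1, \ldots, G_k)$ we obtain at once
\[R(tK_2, G_1, \ldots, G_k) \leq R(tK_2, K_{b,b}).\]

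Next I would evaluate the right-hand side by Theorem~\ref{Z} with $n = b$, which gives the exact value
\[R(tK_2, K_{b,b}) = \max\{\, b + 2t - 1,\; 2b + t - 1 \,\}.\]
It then remains only to resolve the maximum. Comparing the two expressions, their difference is $(b + 2t - 1) - (2b + t - 1) = t - b$, so the first is at least the second precisely when $t \geq b$, and the reverse inequality holds when $t \leq b$. This yields $R(tK_2, K_{b,b}) = b + 2t - 1$ for $t \geq b$ and $R(tK_2, K_{b,b}) = 2b + t - 1$ for $t \leq b$, which matches the two cases in the statement; at the boundary $t = b$ both expressions equal $3t - 1$, so assigning that case to either branch is harmless. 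Combining this with the first display completes the proof.

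The only point requiring any care — and it is the closest thing to an obstacle here — is checking that the two case boundaries line up with those written in the statement, and in particular that the overlapping equality case $t = b$ is consistent with both branches. Since the two linear expressions agree exactly when $t = b$, this is immediate, and no further analysis is needed.
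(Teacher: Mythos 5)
Your proposal is correct and is exactly the argument the paper intends: it states Theorem~\ref{c} as an easy consequence of Theorems~\ref{SS} and~\ref{Z}, which is precisely your reduction to $R(tK_2, K_{b,b})$ followed by resolving the maximum. No difference in approach.
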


It is easy to see that for bipartite graphs $G_1, G_2, \ldots, G_k$ we have $R(G_1, G_2, \ldots, G_k) \leq  2b(G_1, G_2, \ldots, G_k)$.

\begin{cor}\label{AA}
If $R(G_1, G_2, \ldots, G_k) =  2b(G_1, G_2, \ldots, G_k)=2t$ for bipartite graphs $G_1, \ldots, G_k$, then 
\[
R(tK_{2},G_1,\ldots,G_k)=3t-1.
\]
\end{cor}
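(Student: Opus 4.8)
The plan is to prove the two inequalities $R(tK_2, G_1, \ldots, G_k) \le 3t-1$ and $R(tK_2, G_1, \ldots, G_k) \ge 3t-1$ separately, the first from the general upper bound already at hand and the second by an explicit construction.

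For the upper bound I would simply invoke Theorem \ref{c}. By hypothesis $b := b(G_1, \ldots, G_k) = t$, so both conditions $t \le b$ and $t \ge b$ hold (with equality). Substituting $b = t$ into either branch of Theorem \ref{c} gives $R(tK_2, G_1, \ldots, G_k) \le 2b + t - 1 = 3t-1$ as well as $\le b + 2t - 1 = 3t-1$, so the upper bound is immediate.

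For the lower bound I would exhibit a $(k+1)$-coloring of $K_{3t-2}$ — reserving color $0$ for the matching $tK_2$ and using colors $1, \ldots, k$ for $G_1, \ldots, G_k$ — that contains no monochromatic target. Since $R(G_1, \ldots, G_k) = 2t$, there is a $k$-coloring $\chi$ of the edges of $K_{2t-1}$, on a vertex set $A$ with $|A| = 2t-1$, with no copy of $G_i$ in color $i$ for any $i$. I would then adjoin a set $B$ of $t-1$ new vertices, so that $|A \cup B| = 3t-2$, color every edge of $K_{3t-2}$ incident to $B$ with color $0$, and color the edges inside $A$ according to $\chi$. Two checks finish the construction: colors $1, \ldots, k$ occur only inside $A$, so by the choice of $\chi$ there is no $G_i$ in color $i$; and every color-$0$ edge meets $B$, so since matching edges are vertex-disjoint, a color-$0$ matching has at most $|B| = t-1 < t$ edges, ruling out $tK_2$. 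Hence $3t-2 < R(tK_2, G_1, \ldots, G_k)$, i.e. $R(tK_2, G_1, \ldots, G_k) \ge 3t-1$.

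I do not anticipate a genuine obstacle: the upper bound is a direct specialization of Theorem \ref{c}, and the lower bound is a clean gluing of the extremal coloring of $K_{2t-1}$ with a star-like block of color $0$. The one place calling for a word of care is the matching count in color $0$ — an edge lying entirely inside $B$ uses two vertices of $B$ while an edge from $B$ to $A$ uses one, so in every case the number of color-$0$ matching edges is bounded by $|B| = t-1$. The existence of the extremal coloring $\chi$ of $K_{2t-1}$ is precisely what the hypothesis $R(G_1, \ldots, G_k) = 2t$ supplies, so no further input is needed.
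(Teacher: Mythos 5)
Your proposal is correct and follows essentially the same route as the paper: the upper bound is read off from Theorem \ref{c} with $b=t$, and the lower bound uses the coloring of $K_{3t-2}=K_{2t-1}+K_{t-1}$ in which the extremal $k$-coloring sits on $K_{2t-1}$ and all edges meeting the extra $t-1$ vertices get the matching color. Your explicit remark that every edge in the matching color meets the $(t-1)$-vertex block, so a matching there has at most $t-1$ edges, is exactly the (unstated) justification behind the paper's claim that $G^{k+1}$ contains no $tK_2$.
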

\begin{proof}
By Theorem \ref{c}, the upper bound is clear. To see the lower bound consider the graph $G=K_{3t-2}=K_{2t-1}+ K_{t-1}$.
Since $R(G_1, \ldots, G_k)=2t$, we take a $k$-coloring of $E(K_{2t-1})$ which does not contain a copy of $G_{i}$ in color $i$ for each $1 \leq i\leq k$. The remaining edges of $G$ we color with the last $k+1$-th color. Thus $G^{k+1}$ contains no copy of $tK_{2}$. This proves the corollary.
\end{proof}

\bigskip
\section{Corollaries}

\subsection{$R(C_{n_{0}},P_{n_{1}},P_{n_{2}})$ for large $n_{0}$}

In this section, we determine the value of $R(C_{n_{0}},P_{n_{1}},P_{n_{2}})$ for large $n_{0}$ and special cases of $n_{1}, n_{2}$. In order to prove it, we first recall the result of Bondy and Erd\H{o}s. In 1973 \cite{7} they proved that for $n> n_{1}(r)$ (that is for sufficiently large $n$), $R(C_{n},K_{r,r})=n+r-1$. More precisely, they showed the following.

\begin{thm}[\cite{7}] \label{M}
 For $n> n_{1}(r,t)$, $R(C_{n},K^{t+1}_{r})=t(n-1)+r$ such that $K^{t+1}_{r}$ is the complete $(t+1)$-partite graph with parts of 
size $r$.
\end{thm}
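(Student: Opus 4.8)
The plan is to establish matching lower and upper bounds for $N:=t(n-1)+r$, with the lower bound coming from an explicit colouring and the upper bound proved by induction on the number of parts $t+1$.

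For the lower bound I would colour $K_{N-1}=K_{t(n-1)+r-1}$ so as to avoid both a red $C_n$ and a blue $K^{t+1}_r$. Partition the vertices into $t$ blocks of size $n-1$ and one block of size $r-1$, colour every edge inside a block red and every edge between two distinct blocks blue. The red graph is then a disjoint union of cliques, each of order at most $n-1$, so it contains no $C_n$. The blue graph is the complete $(t+1)$-partite graph whose parts are exactly these $t+1$ blocks. In such a graph two vertices are blue-adjacent precisely when they lie in different blocks, so in any copy of $K^{t+1}_r$ the $t+1$ parts would have to meet pairwise disjoint families of blocks; since there are exactly $t+1$ blocks and $t+1$ parts, each part must sit inside a single block of size at least $r$, which is impossible because one block has only $r-1$ vertices. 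Hence $R(C_n,K^{t+1}_r)\ge t(n-1)+r$.

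For the upper bound I fix a red/blue colouring of $K_N$ with no red $C_n$ and must produce a blue $K^{t+1}_r$. The induction bottoms out at the trivial case $t=0$, where $K^1_r=\overline{K_r}$ is edgeless and any $r$ vertices already form a blue copy, so $R(C_n,\overline{K_r})=r$. For the inductive step I would use that $K^{t+1}_r$ is the join of $K^{t}_r$ with an independent set on $r$ vertices: it therefore suffices to find $r$ vertices $S$ whose common blue neighbourhood $T$ satisfies $|T|\ge (t-1)(n-1)+r=R(C_n,K^{t}_r)$. Indeed, the induction hypothesis applied to $K_N[T]$ (which still contains no red $C_n$) yields a blue $K^{t}_r$, and $S$ together with this copy is a blue $K^{t+1}_r$, since $S$ is completely blue-joined to $T$. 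To locate such an $S$ I split on the density of the red graph. If the red graph is sparse, say with at most $\frac{1}{2}(n-1)(N-1)$ edges, then the blue graph is dense, and a convexity argument applied to $\sum_{v}\binom{d(v)}{r}$ (where $d(v)$ is the blue degree of $v$), in the spirit of K\H{o}v\'{a}ri--S\'{o}s--Tur\'{a}n, produces an $r$-set whose common blue neighbourhood meets the required size. If instead the red graph is dense, the Erd\H{o}s--Gallai circumference theorem forces a red cycle of length at least $n$, and the remaining task is to convert such a cycle into one of length exactly $n$.

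The main obstacle is precisely this last conversion, and it is what forces the hypothesis that $n$ be large relative to $r$ and $t$. A red cycle of length $\ge n$ does not by itself yield $C_n$ (a long red cycle without chords already shows this), and a near-bipartite red graph can avoid odd cycles entirely. Exploiting the abundance of red edges, one would apply rotation--extension and weak-pancyclicity arguments to realise every cycle length in a long interval reaching up to the circumference, so that $C_n$ appears; the exceptional near-bipartite configurations must be handled separately, and in those cases the complement already contains two large cliques that furnish a blue $K^{t+1}_r$. Arranging all the quantitative thresholds so that the crossover between the sparse and dense regimes lands exactly at $N=t(n-1)+r$, and thereby pinning down the admissible range $n>n_1(r,t)$, is the delicate part of the proof.
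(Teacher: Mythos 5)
This statement is quoted from Bondy and Erd\H{o}s \cite{7}; the paper offers no proof of it, so your attempt can only be judged on its own terms. Your lower bound is correct and is the standard extremal colouring: $t$ red cliques of order $n-1$ plus one of order $r-1$ leave a red graph with no $C_n$ and a blue complete multipartite graph in which any candidate $K^{t+1}_r$ would need $t+1$ pairwise distinct blocks of size at least $r$, which do not exist. The inductive framework for the upper bound (peel off one part of size $r$ by finding an $r$-set whose common blue neighbourhood has order at least $R(C_n,K^t_r)=(t-1)(n-1)+r$) is also the right skeleton.

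The gaps are in how you propose to find that $r$-set. In the ``sparse'' regime the convexity count does not close: with at most $\tfrac12(n-1)(N-1)$ red edges the average blue degree is only about $N-n+1=(t-1)(n-1)+r$, so $\sum_v\binom{d(v)}{r}\ge N\binom{N-n+1}{r}$ falls short of $\bigl((t-1)(n-1)+r\bigr)\binom{N}{r}$ by a factor of roughly $\bigl(\tfrac{t-1}{t}\bigr)^{r-1}$; unlike the K\H{o}v\'{a}ri--S\'{o}s--Tur\'{a}n setting, the target common neighbourhood here is a constant fraction of $N$, not a constant. The extremal colouring itself lies in your sparse regime and shows the averaging must fail: there, an $r$-set meeting two red cliques has common blue neighbourhood of order only $(t-2)(n-1)+r$, and the only good $r$-sets are those confined to a single red clique. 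So $S$ must be chosen using the component/block structure of the $C_n$-free red graph, not by counting. Moreover, lowering the edge threshold to rescue the counting destroys the other half of the dichotomy, since Erd\H{o}s--Gallai then only guarantees circumference about $n/r$ rather than $n$. Finally, as you yourself concede, the ``dense'' half --- converting circumference $\ge n$ into a cycle of length exactly $n$, and disposing of the near-bipartite exceptions --- is the heart of the Bondy--Erd\H{o}s argument and is only gestured at. As it stands the proposal is an outline with a correct construction and a correct induction scheme, but not a proof of the upper bound.
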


\begin{thm}\label{q}For sufficiently large $n_{0}$ and three following cases:
\begin{itemize}
\item [1.] $n_{1}=2s$, $n_{2}=2m$ and $m-1<2s$,
\item [2.] $n_{1}=n_{2}=2s$,
\item [3.] $n_{1}=2s+1$, $n_{2}=2m$ and $s<m-1<2s+1$, we have
\end{itemize} 
$$R(C_{n_0}, P_{n_{1}},P_{n_{2}}) = n_0 + \Big \lfloor \frac{n_1}{2} \Big \rfloor + \Big \lfloor \frac{n_2}{2} \Big \rfloor -2.$$
\end{thm}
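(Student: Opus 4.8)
The plan is to establish matching upper and lower bounds, with the target value $n_0+\lfloor n_1/2\rfloor+\lfloor n_2/2\rfloor-2$ appearing as $n_0+b-1$ for $b=b(P_{n_1},P_{n_2})=\lfloor n_1/2\rfloor+\lfloor n_2/2\rfloor-1$. The two halves are of quite different character: the upper bound is a short deduction from the tools already in hand, while the lower bound is an explicit colouring. I expect essentially all of the real work to sit inside the evaluation of the bipartite Ramsey number $b(P_{n_1},P_{n_2})$, and this is exactly where the three case hypotheses enter.

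For the upper bound I would apply Theorem~\ref{SS} with $H=C_{n_0}$, $G_1=P_{n_1}$ and $G_2=P_{n_2}$, which gives $R(C_{n_0},P_{n_1},P_{n_2})\le R(C_{n_0},K_{b,b})$ with $b=b(P_{n_1},P_{n_2})$. Since $n_0$ is large, Theorem~\ref{M} with $t=1$ and $r=b$ yields $R(C_{n_0},K_{b,b})=n_0+b-1$. Hence it suffices to show $b(P_{n_1},P_{n_2})\le\lfloor n_1/2\rfloor+\lfloor n_2/2\rfloor-1$ in each of the three cases, i.e.\ that every red/green colouring of $E(K_{b,b})$ with $b=\lfloor n_1/2\rfloor+\lfloor n_2/2\rfloor-1$ contains a red $P_{n_1}$ or a green $P_{n_2}$. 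This is the main obstacle. I would attack it by the standard longest-path method for path Ramsey numbers: assume neither monochromatic path is present, consider a longest red path and a longest green path, and use the alternating structure of paths in the bipartite host $K_{b,b}$ together with an Erd\H{o}s--Gallai type edge count (a bipartite graph on $b+b$ vertices avoiding $P_{n_i}$ cannot be too dense). The inequalities $m-1<2s$, $n_1=n_2$, and $s<m-1<2s+1$ are precisely the parameter ranges in which the balanced value $\lfloor n_1/2\rfloor+\lfloor n_2/2\rfloor-1$ is attained, rather than a larger value governed by the longer path alone; they should be invoked exactly at the step where one rules out a dense colouring avoiding both paths.

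For the lower bound I would give a single construction uniform over the three cases. Set $N=n_0+\lfloor n_1/2\rfloor+\lfloor n_2/2\rfloor-3$ and partition $V(K_N)$ into $A$, $B_1$, $B_2$ with $|A|=n_0-1$, $|B_1|=\lfloor n_1/2\rfloor-1$ and $|B_2|=\lfloor n_2/2\rfloor-1$. Colour every edge inside $A$ blue, every edge meeting $B_1$ red, and every remaining edge (those meeting $B_2$ but not $B_1$) green. Then the blue graph is $K_{n_0-1}$ and contains no $C_{n_0}$; the set $B_1$ is a vertex cover of the red graph, so $A\cup B_2$ is red-independent and every red path alternates in and out of $B_1$, whence a red path has at most $2|B_1|+1=2\lfloor n_1/2\rfloor-1<n_1$ vertices; symmetrically, $B_2$ covers the green graph and no green path reaches $n_2$ vertices. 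Thus $K_N$ has no blue $C_{n_0}$, no red $P_{n_1}$ and no green $P_{n_2}$, giving $R(C_{n_0},P_{n_1},P_{n_2})\ge N+1=n_0+\lfloor n_1/2\rfloor+\lfloor n_2/2\rfloor-2$.

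Combining the two bounds yields the claimed equality. The reduction to $R(C_{n_0},K_{b,b})$ via Theorems~\ref{SS} and~\ref{M} and the lower-bound colouring are uniform and routine; the genuinely case-dependent step, and the one I expect to be hardest, is the upper bound $b(P_{n_1},P_{n_2})\le\lfloor n_1/2\rfloor+\lfloor n_2/2\rfloor-1$, whose proof must distinguish the parities of $n_1,n_2$ and use the stated inequalities on $s$ and $m$.
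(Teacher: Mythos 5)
Your overall architecture matches the paper's: the upper bound is obtained exactly as in the paper, by combining Theorem~\ref{SS} (with $H=C_{n_0}$) and Theorem~\ref{M} (with $t=1$, $r=b$) to get $R(C_{n_0},P_{n_1},P_{n_2})\le n_0+b(P_{n_1},P_{n_2})-1$. The one place where you diverge is that you propose to \emph{prove} $b(P_{n_1},P_{n_2})\le\lfloor n_1/2\rfloor+\lfloor n_2/2\rfloor-1$ from scratch by a longest-path/Erd\H{o}s--Gallai argument, and you only sketch that step. As written this is the single incomplete point of your proof: the sketch is not a proof, and the bipartite path--path Ramsey number is genuinely nontrivial to compute. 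However, no new idea is needed to close it --- the required values are precisely items 1 and 3 of Theorem~\ref{t} (Faudree--Schelp), which the paper simply cites; you should do the same rather than re-derive it.

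Your lower bound is correct and is, if anything, cleaner than the paper's. The paper takes $K_{R-1}\cup K_{n_2/2-1}$ with $R=R(C_{n_0},P_{n_1})$, reuses the known extremal two-colouring of $K_{R-1}$, colours the extra clique red and the complement green; your three-part colouring of $K_{N}$ with $|A|=n_0-1$, $|B_1|=\lfloor n_1/2\rfloor-1$, $|B_2|=\lfloor n_2/2\rfloor-1$ is essentially that construction unpacked, and your vertex-cover argument (a graph with a vertex cover of size $c$ has no path on more than $2c+1$ vertices) correctly excludes the red $P_{n_1}$ and green $P_{n_2}$. A small bonus of your version: it needs no hypothesis relating $n_1$ and $n_2$, whereas the paper's construction uses $\frac{n_2}{2}-1<n_1$ (resp.\ $m-1<2s+1$) to keep a red $P_{n_1}$ out of the extra red clique; in your colouring the conditions such as $m-1<2s$ are only needed where they actually matter, namely to pin down the value of $b(P_{n_1},P_{n_2})$ in Theorem~\ref{t}. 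So: correct approach, correct construction, but replace the sketched evaluation of $b(P_{n_1},P_{n_2})$ by a citation of Theorem~\ref{t}, or else supply that proof in full.
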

\begin{proof}
For the upper bound, by  Theorem \ref{SS}, $R(C_{n_{0}},P_{n_{1}},P_{n_{2}})\leq R(C_{n_{0}},K_{b,b})$ such that $b=b(P_{n_{1}},P_{n_{2}})$. By Theorem \ref{M}, $R(C_{n_{0}},P_{n_{1}},P_{n_{2}})\leq n_{0}+b-1$.
On the other hand, for all three cases $b=\Big \lfloor \frac{n_1}{2} \Big \rfloor + \Big \lfloor \frac{n_2}{2} \Big \rfloor -1$ (see Theorem \ref{t}) and $R(C_{n_{0}},P_{n_{1}},P_{n_{2}}) \leq n_{0}+\Big \lfloor \frac{n_1}{2} \Big \rfloor + \Big \lfloor \frac{n_2}{2} \Big \rfloor -2$.

For the lower bound, consider the graph $G=K_{R-1}\cup K_{\frac{n_2}{2}-1}$ such that $R=R(C_{n_{0}},P_{n_{1}})$. It is known that $R=n_{0}+\Big \lfloor \frac{n_1}{2} \Big \rfloor -1$ for $n_{0}\geq n_{1}\geq 2$. It is clear that there is a two-coloring (blue-red) of $K_{R-1}$ such that $G^{b}$ contains no copy of $C_{n_{0}}$ and $G^{r}$ contains no copy of $P_{n_{1}}$. Next color the remaining subgraph $K_{\frac{n_2}{2}-1}$ with red. Since $\frac{n_2}{2}-1 < n_{1}$, there is no a red copy of $P_{n_{1}}$ in $K_{\frac{n_2}{2}-1}$ as well. Consider $\overline{G}=\overline{K}_{R-1}+ \overline{K}_{\frac{n_2}{2}-1}$ and color it with green. Thus $G^{g}$ contains no copy of $P_{n_{2}}$. 
The equality follows.
\end{proof}

In 1975 Faudree and Schelp \cite{6} proved that if $n_{0}\geq 6(n_{1}+n_{2})^{2}$, then $R(P_{n_{0}},P_{n_{1}},P_{n_{2}})=n_{0}+\lfloor n_{1}/2 \rfloor+\lfloor n_{2}/2 \rfloor-2$ for $n_{1}, n_{2}\geq 2$.  
Since $R(P_{n_{0}},P_{n_{1}},P_{n_{2}}) \leq R(C_{n_{0}},P_{n_{1}},P_{n_{2}})$, we can apply Theorem \ref{q} to $P_{n_{0}}$ instead of $C_{n_{0}}$ to obtain the same results as in \cite{6}. 

\begin{cor}
For sufficiently large $n_{0}$ and three following cases:
\begin{itemize}
\item [1.] $n_{1}=2s$, $n_{2}=2m$ and $m-1<2s$,
\item [2.] $n_{1}=n_{2}=2s$,
\item [3.] $n_{1}=2s+1$, $n_{2}=2m$ and $s<m-1<2s+1$, we have
\end{itemize} 

$$R(P_{n_0}, P_{n_{1}},P_{n_{2}}) = n_0 + \Big \lfloor \frac{n_1}{2} \Big \rfloor + \Big \lfloor \frac{n_2}{2} \Big \rfloor -2.$$
\end{cor}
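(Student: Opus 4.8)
The plan is to bracket $R(P_{n_0},P_{n_1},P_{n_2})$ between the value already obtained in Theorem~\ref{q} (as the upper bound) and a matching coloring (as the lower bound), using the single structural fact that a path on $n_0$ vertices is a subgraph of a cycle on $n_0$ vertices. For the upper bound I would note that any $3$-coloring of $E(K_N)$ that forces a monochromatic $C_{n_0}$ in the first color also forces a monochromatic $P_{n_0}$ in that color, since $P_{n_0}\subseteq C_{n_0}$. Hence $R(P_{n_0},P_{n_1},P_{n_2})\le R(C_{n_0},P_{n_1},P_{n_2})$, and Theorem~\ref{q} yields $R(P_{n_0},P_{n_1},P_{n_2})\le n_0+\lfloor n_1/2\rfloor+\lfloor n_2/2\rfloor-2$ in all three cases.

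For the lower bound I would reuse the construction from the lower-bound half of Theorem~\ref{q}, but with the cycle Ramsey number $R(C_{n_0},P_{n_1})$ replaced by the path Ramsey number $R(P_{n_0},P_{n_1})$. By Gerencs\'{e}r and Gy\'{a}rf\'{a}s \cite{17} these agree in value, $R(P_{n_0},P_{n_1})=n_0+\lfloor n_1/2\rfloor-1=:R$, so the vertex count is unchanged. First I would set $G=K_{R-1}\cup K_{\frac{n_2}{2}-1}$ and choose a blue--red coloring of $K_{R-1}$ with no blue $P_{n_0}$ and no red $P_{n_1}$; this exists exactly because $R-1<R(P_{n_0},P_{n_1})$. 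Next I would color $K_{\frac{n_2}{2}-1}$ red, which introduces no red $P_{n_1}$ since $\frac{n_2}{2}-1<n_1$ in each of the three cases, and color all edges between the two cliques green. The green graph is the complete bipartite graph $K_{R-1,\frac{n_2}{2}-1}$, whose longest path has only $n_2-1$ vertices, so it contains no green $P_{n_2}$. Counting vertices gives $(R-1)+(\tfrac{n_2}{2}-1)=n_0+\lfloor n_1/2\rfloor+\lfloor n_2/2\rfloor-3$, which is the Ramsey number minus one, establishing the lower bound.

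The one delicate step, and the part I would check most carefully, is this substitution: the extremal coloring witnessing $R(C_{n_0},P_{n_1})$ need not avoid a blue $P_{n_0}$, so one cannot simply quote Theorem~\ref{q} verbatim. The remedy is the numerical identity $R(P_{n_0},P_{n_1})=R(C_{n_0},P_{n_1})$ for large $n_0$, which lets one genuinely forbid a blue $P_{n_0}$ while preserving the size of the construction. Everything else, including the three case hypotheses on $n_1,n_2$, enters only through the bipartite Ramsey value $\lfloor n_1/2\rfloor+\lfloor n_2/2\rfloor-1$ recorded in the proof of Theorem~\ref{q}, so no further casework is required.
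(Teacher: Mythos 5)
Your proposal is correct and follows essentially the same route as the paper: the upper bound comes from $P_{n_0}\subseteq C_{n_0}$ together with Theorem~\ref{q}, and the lower bound from the standard split construction $K_{R-1}\cup K_{n_2/2-1}$ with the cross edges in the third color. In fact you are more careful than the paper, which merely cites \cite{6} for the matching lower bound; your observation that one must use $R(P_{n_0},P_{n_1})$ rather than the extremal coloring for $R(C_{n_0},P_{n_1})$ (the two being numerically equal by Gerencs\'{e}r--Gy\'{a}rf\'{a}s) is exactly the point that needs checking, and your verification that $\frac{n_2}{2}-1<n_1$ holds in all three cases closes the argument.
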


\bigskip
\subsection{$R(tK_{2},P_{k},P_{k^{'}})$}

In 1975 Faudree and Schelp \cite{9} determined $b(P_{n},P_{k})$ for all $n$ and $k$.
\begin{thm}[\cite{9}] \label{t}
For $s, m \in Z$,
\begin{itemize}
\item[1.] $b(P_{2s},P_{2m})=(s+m-1,s+m-1)$,
\item[2.] $b(P_{2s+1},P_{2m})=(s+m,s+m-1)$ for $s\geq m-1$,
\item[3.] $b(P_{2s+1},P_{2m})=(s+m-1,s+m-1)$ for $s < m-1$,
\item[4.] $b(P_{2s+1},P_{2m+1})=(s+m,s+m-1)$ for $s \neq m$,
\item[5.] $b(P_{2s+1},P_{2s+1})=(2s+1,2s-1)$.
\end{itemize}
\end{thm}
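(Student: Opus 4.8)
The plan is to read the equality $b(G_1,G_2)=(b_1,b_2)$ as the conjunction of an \emph{upper bound} ($K_{b_1,b_2}\to(G_1,G_2)$, i.e.\ every red--blue coloring has a red $G_1$ or a blue $G_2$) and a \emph{lower bound} (reducing either side destroys the arrowing, so $K_{b_1-1,b_2}\not\to$ and $K_{b_1,b_2-1}\not\to$). I would settle each of the five items as such a pair of bounds. The one elementary fact I rely on throughout is that the longest path in $K_{a,b}$ with $a\le b$ has $2a+1$ vertices when $b>a$ and $2a$ vertices when $b=a$; this single formula is what makes the odd cases (items 2, 4, 5) asymmetric while the all-even case (item 1) stays balanced.

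For the lower bounds I would use \emph{block colorings}. To avoid a red $P_{2s}$ it suffices to make the red graph a disjoint union of complete bipartite pieces each with at most $s-1$ vertices on one side, and symmetrically for blue. In case 1, split the side of size $s+m-2$ into a block $X_1$ of size $s-1$ and a block $X_2$ of size $m-1$, color $X_1\times Y$ red and $X_2\times Y$ blue; then the red graph is $K_{s-1,\,s+m-1}$ (longest path $2s-1$) and the blue graph is $K_{m-1,\,s+m-1}$ (longest path $2m-1$), certifying $K_{s+m-2,\,s+m-1}\not\to(P_{2s},P_{2m})$. For the odd cases I would instead use a \emph{block-diagonal} coloring that partitions both sides and places balanced $K_{s,s}$ pieces on the diagonal: since the longest path of $K_{s,s}$ is only $2s$, this is still too short for $P_{2s+1}$, and it is exactly this slack that permits one coordinate to exceed the other. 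Checking that the block sizes sum to the prescribed side lengths, with a balanced $K_{s,s}$ used precisely when a target path is odd, produces the $(s+m)$ versus $(s+m-1)$ split and the exceptional pair $(2s+1,2s-1)$ of item 5.

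For the upper bounds I would argue by a longest-path / rotation--extension method. Fix a coloring of $K_{b_1,b_2}$, take a longest red path $P$, and suppose $P$ has fewer than $n_1$ vertices; using the endpoints of $P$ together with the vertices off $P$, an endpoint-neighborhood exchange assembles a blue path on at least $n_2$ vertices, and the bipartite parity of where the endpoints sit is what pins the threshold to the parities of $n_1,n_2$. An equivalent route is double counting: a $P_{n_1}$-free and a $P_{n_2}$-free bipartite graph on the same parts cannot have edge counts summing to the full $b_1b_2$ once $b_1,b_2$ reach the stated values, forcing one color to contain its path. I would run this generic argument first on items 1 and 3, where the bounds sit strictly inside the extremal regime.

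The hard part will be the tight boundary cases, items 2, 4 and 5 (and the excluded diagonal $s=m$ in item 4). There the edge count, or the path-extension inequality, is met with equality, so the naive argument stalls and one must invoke the \emph{structure} of extremal $P_k$-free bipartite graphs --- essentially the block unions used in the constructions --- and rule out, separately for each parity, that both colors simultaneously realize their extremal configuration inside $K_{b_1,b_2}$. Distinguishing a balanced $K_{s,s}$ block (admissible against an odd path) from an unbalanced $K_{s,s+1}$ block (which already contains $P_{2s+1}$) is precisely where the careful case split, and the $s\neq m$ and odd--odd exceptions, genuinely originate.
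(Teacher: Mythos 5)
This statement is not proved in the paper at all: Theorem~\ref{t} is quoted verbatim from Faudree and Schelp \cite{9} and used as a black box (its role here is only to supply the value of $b=b(P_{n_1},P_{n_2})$ in Theorems~\ref{q} and~\ref{p}). So there is no in-paper argument to compare yours against; what can be judged is whether your reconstruction would stand on its own.

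As it stands it would not, because the two places where the theorem actually has content are exactly the two places you defer. First, the lower bounds: your block coloring for item 1 is correct and complete (the longest-path formula for $K_{a,b}$ does give $2s-1$ and $2m-1$ in the two color classes), but for items 2--5 you only describe a template ("block-diagonal with balanced $K_{s,s}$ pieces") and assert that bookkeeping "produces the $(s+m)$ versus $(s+m-1)$ split and the exceptional pair $(2s+1,2s-1)$." You never exhibit the colorings, and the dichotomy between items 2 and 3 (the odd--even case needs the larger board only when $s\geq m-1$) is precisely the kind of fact that does not fall out of a generic template --- it has to be checked which block decompositions fit into which rectangles. Second, the upper bounds: you name two possible methods (rotation--extension, edge counting) without committing to either, and you concede that in the tight cases both stall and one "must invoke the structure of extremal $P_k$-free bipartite graphs" --- but no such stability statement is formulated, let alone proved. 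Since items 2, 4 and 5 are entirely boundary cases, the proposal proves only item 1's lower bound and a plan for the rest. A complete proof really does require the case analysis of \cite{9}; if you want to use this theorem, cite it as the paper does rather than re-deriving it.
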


\begin{thm}\label{p}For positive integers $k, m, s, t,$
\begin{itemize}
\item[(i)] For even $k$, $R((k-1)K_{2},P_{k},P_{k})=3k-4$.
\item[(ii)] For $s < m-1<2s+1$ and $t\geq m+s-1$, $R(tK_{2},P_{2s+1},P_{2m})=s+m+2t-2.$
\end{itemize}
\end{thm}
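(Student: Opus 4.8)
The plan is to prove each part by matching an upper bound, read off from the tools already assembled, against an explicit extremal $3$-colouring.

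\emph{Upper bounds.} Both follow from Theorem~\ref{c} once the relevant bipartite Ramsey number is identified through Theorem~\ref{t}. For (i), writing $k=2s$, Theorem~\ref{t}(1) gives $b:=b(P_{2s},P_{2s})=2s-1=k-1$, so with $t=k-1$ we are in the boundary case $t=b$ and Theorem~\ref{c} yields $R((k-1)K_2,P_k,P_k)\le b+2t-1=3(k-1)-1=3k-4$. For (ii), the hypothesis $s<m-1$ places us in Theorem~\ref{t}(3), so $b:=b(P_{2s+1},P_{2m})=s+m-1$, while $t\ge m+s-1=b$ selects the branch $t\ge b$ of Theorem~\ref{c}, giving $R(tK_2,P_{2s+1},P_{2m})\le b+2t-1=s+m+2t-2$. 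So the entire task reduces to producing colourings that realise these numbers.

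\emph{Lower bound for (ii).} I would colour $K_N$ with $N=s+m+2t-3$ using a nested-cover construction. Partition the vertices as $S\cup X\cup Y$ with $|S|=s-1$, $|X|=2t-1$, $|Y|=m-1$ (so $|S|+|X|+|Y|=N$), and colour each edge by the first applicable rule: colour $2$ if it meets $S$; else colour $3$ if it meets $Y$; else (both ends in $X$) colour $1$. I will use the elementary fact that a graph with a vertex cover of size $c$ has no path on more than $2c+1$ vertices. Then colour $2$ is covered by $S$, so its longest path has at most $2(s-1)+1=2s-1<2s+1$ vertices (no $P_{2s+1}$); colour $3$ is covered by $Y$, so its longest path has at most $2m-1<2m$ vertices (no $P_{2m}$); and colour $1$ is exactly $K_{2t-1}$, whose matching number is $t-1<t$ (no $tK_2$). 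Hence $R\ge N+1=s+m+2t-2$, matching the upper bound.

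\emph{Lower bound for (i).} The same idea, now symmetric in the two path colours, works on $K_{3k-5}$. Partition the $3k-5=6s-5$ vertices as $S_2\cup S_3\cup X$ with $|S_2|=|S_3|=s-1$ and $|X|=4s-3$; colour edges meeting $S_2$ with colour $2$, the remaining edges meeting $S_3$ with colour $3$, and the rest (inside $X$) with colour $1$. Each path colour has a vertex cover of size $s-1$, hence no path on more than $2s-1<2s=k$ vertices and no $P_k$, while colour $1$ is $K_{4s-3}$ with matching number $\lfloor(4s-3)/2\rfloor=2s-2=k-2<k-1$ and thus no $(k-1)K_2$. This gives $R\ge 3k-4$.

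\emph{Main obstacle.} The upper bounds are essentially free from Theorems~\ref{c} and~\ref{t}, so the real point is to locate the right lower-bound configuration. The crucial observation is that the two (or one) path colours can be confined to small vertex covers, which caps their longest paths well below the forbidden length, leaving all remaining edges as a single clique whose matching number is, by the precise choice of the three part sizes, exactly one short of the forbidden matching. The only thing to check carefully is that the part sizes sum to $N$ while each of the three constraints stays tight; the hypotheses (\,$k$ even in (i); $s<m-1$ and $t\ge m+s-1$ in (ii)\,) are exactly what make the quoted value of $b$ and the relevant branch of Theorem~\ref{c} applicable, so that the two bounds coincide.
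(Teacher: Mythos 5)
Your proof is correct. The upper bounds are obtained exactly as in the paper: Theorem~\ref{c} combined with the values $b(P_{2s},P_{2s})=2s-1$ and $b(P_{2s+1},P_{2m})=s+m-1$ from Theorem~\ref{t} (in part (i) you invoke the branch $t\ge b$ while the paper uses $t\le b$, but at $t=b$ both give $3k-4$). Where you genuinely diverge is in the lower bounds. The paper recycles extremal colourings for the two-colour Ramsey numbers $R((k-1)K_2,P_k)$ and $R(tK_2,P_{2s+1})$ (quoted from Faudree--Schelp--Sheehan), taking $G=K_{R-1}\cup K_{\text{small}}$, colouring the small clique with the second path colour and the join with the third, and bounding green paths by the small side of a bipartite graph. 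You instead build a single explicit three-part partition and confine each path colour to a small vertex cover, using only the elementary fact that vertex cover number $c$ caps path length at $2c+1$ vertices; your matching colour is then a clique of odd order with matching number exactly one short. Both constructions are valid and give the same bound; yours is more self-contained (no appeal to two-colour Ramsey values) and, notably, your lower bound for (ii) does not use the hypothesis $m-1<2s+1$, which the paper's construction needs to ensure $K_{m-1}$ contains no red $P_{2s+1}$ --- so your argument in fact establishes (ii) under the weaker assumptions $s<m-1$ and $t\ge m+s-1$ alone.
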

\begin{proof}(i)
Theorem \ref{c}  implies that $R(tK_{2},P_{k},P_{k}) \leq 2b+t-1$ for $t\leq b$ and $b=b(P_{k},P_{k})$. Theorem \ref{t} (that is $b=b(P_{k},P_{k})=k-1$ for even $k$)  completes the proof for the upper bound.

\noindent It is known that $R=R((k-1)K_{2},P_{k})=2k+\lfloor k/2 \rfloor -3$ (see \cite{30}). Consider the graph $G=K_{R-1}\cup K_{k/2 -1}$. Clearly $K_{R-1}$ has a two coloring (blue-red) such that $K_{R-1}^{b}$ contains no copy of  $(k-1)K_{2}$ and $K_{R-1}^{r}$ contains no copy of $P_{k}$. We take this coloring and  next we color the edges of $K_{k/2 -1}$ with red. The  edges of $\overline{G}$ we color with green. Hence $G^{g}$ contains no copy of $P_{k}$ as well. This gives the desired lower bound.

(ii) As before, we have $R(tK_{2},P_{2s+1},P_{2m}) \leq b+2t-1 \leq s+m+2t-2$ for $s < m-1$ and $t\geq m+s-1$. Now, let  $G=K_{R-1}\cup K_{m-1}$ such that $R=R(tK_{2}, P_{2s+1})=2t+\lfloor (2s+1)/2 \rfloor-1$ for $t> \lfloor (2s+1)/2 \rfloor$ (see \cite{30}). Clearly $K_{R-1}$ can be colored in such a way that $K_{R-1}^{b}$ contains no copy of $tK_{2}$ and $K_{R-1}^{r}$  contains no copy of $P_{2s+1}$. Next we color the subgraph $K_{m-1}$ with red and the edges of $\overline{G}$ with green. Then $G^{g}$ contains no copy of $P_{2m}$ and the proof is complete.
\end{proof}

\bigskip
\subsection{$R(C_n(P_n),K_{1,k_{1}}, K_{1,k_{2}}, \ldots, K_{1,k_{t}}, m_{1}K_{2}, m_{2}K_{2}, \ldots, m_{s}K_{2})$ for large $n$}

\begin{thm}\label{d}
Let $m_{1}, m_{2},\ldots,m_{s}$ and $k_{1}, k_{2}, \ldots, k_{t}$ be positive integers and  $n> n_{1}(b)$ where 
$b=b(K_{1,k_{1}}, K_{1,k_{2}}, \ldots, K_{1,k_{t}}, m_{1}K_{2}, m_{2}K_{2}, \ldots, m_{s}K_{2})$, then
\[
R(C_n,K_{1,k_{1}} ,K_{1,k_{2}}, \ldots,K_{1,k_{t}}, m_{1}K_{2}, m_{2}K_{2}, \ldots, m_{s}K_{2})\leq n+b-1.
\]
\end{thm}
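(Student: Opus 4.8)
The plan is to obtain this bound directly, as a combination of Theorem~\ref{SS} with the Bondy--Erd\H{o}s result recorded in Theorem~\ref{M}; no new combinatorial input is needed.

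First I would observe that every graph appearing in the list $K_{1,k_1}, \ldots, K_{1,k_t}, m_1 K_2, \ldots, m_s K_2$ is bipartite. Indeed, each star $K_{1,k_i}$ is trivially bipartite (the center on one side, the leaves on the other), and each stripe $m_j K_2$, being a disjoint union of edges, is bipartite as well. Writing $b = b(K_{1,k_1}, \ldots, K_{1,k_t}, m_1 K_2, \ldots, m_s K_2)$ for the associated multicolor bipartite Ramsey number, I can apply Theorem~\ref{SS} with $H = C_n$ and the $k = s+t$ bipartite graphs above. This immediately yields
\[
R(C_n, K_{1,k_1}, \ldots, K_{1,k_t}, m_1 K_2, \ldots, m_s K_2) \leq R(C_n, K_{b,b}).
\]

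It then remains only to control $R(C_n, K_{b,b})$, and for this I would invoke Theorem~\ref{M} in the special case $t = 1$. The complete $(t+1)$-partite graph $K_r^{t+1}$ with all parts of size $r$ specializes, when $t = 1$ and $r = b$, to the complete bipartite graph $K_b^{2} = K_{b,b}$. Hence for $n > n_1(b) := n_1(b,1)$ Theorem~\ref{M} gives $R(C_n, K_{b,b}) = 1\cdot(n-1) + b = n + b - 1$. Substituting this into the previous display completes the argument and produces the claimed upper bound $n + b - 1$.

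Since the statement asserts only an upper bound, and no matching lower bound, there is no genuine obstacle here beyond correctly aligning the conventions of the two cited results. The one point that warrants a line of care is verifying that the threshold $n_1(b)$ in the hypothesis is precisely the $t = 1$ specialization $n_1(b,1)$ of the Bondy--Erd\H{o}s threshold, so that the asymptotic condition ``$n$ sufficiently large'' matches exactly. Everything else is a mechanical substitution.
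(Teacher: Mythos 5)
Your argument is correct and is essentially identical to the paper's own proof: the paper likewise applies Theorem~\ref{SS} (with $H=C_n$ and the stars and stripes as the bipartite graphs) to reduce to $R(C_n,K_{b,b})$, and then invokes Theorem~\ref{M} with $t=1$, $r=b$ to conclude $R(C_n,K_{b,b})=n+b-1$ for $n>n_1(b)$. Your additional care in matching the threshold $n_1(b)$ with the Bondy--Erd\H{o}s threshold $n_1(b,1)$ is a reasonable point that the paper leaves implicit.
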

\begin{proof}
By using the same argument as in Theorem \ref{SS}, we have\\ $R(C_n,K_{1,k_{1}}, K_{1,k_{2}}, \ldots, K_{1,k_{t}}, m_{1}K_{2}, m_{2}K_{2}, \ldots, m_{s}K_{2}) \leq R(C_n,K_{b,b})$ \\ where $b=b(K_{1,k_{1}}, K_{1,k_{2}}, \ldots, K_{1,k_{t}}, m_{1}K_{2}, m_{2}K_{2}, \ldots, m_{s}K_{2})$. Next we apply Theorem \ref{M}.
\end{proof}

\begin{lem}[\cite{8}]\label{f} Let $m_{1}, m_{2},\ldots,m_{s}$ and $k_{1}, k_{2}, \ldots, k_{t}$ be positive integers with $\Lambda=\sum_{i=1}^{s}(m_{i}-1)$ and $\sum=\sum_{i=1}^{t}(k_{i}-1)$. Then $b(K_{1,k_{1}}, K_{1,k_{2}}, \ldots, K_{1,k_{t}}, m_{1}K_{2}, m_{2}K_{2}, \ldots, m_{s}K_{2})=b$, where
\[
b =
\begin{cases}
\Lambda+1  & \text{if } \Sigma<\lfloor (\Lambda+1)/2 \rfloor , \\
\Sigma+\lfloor \Lambda/2 \rfloor+1 & \text{if }  \Sigma \geq \lfloor (\Lambda+1)/2 \rfloor .
\end{cases}
\]
\end{lem}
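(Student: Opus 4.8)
The plan is to treat $b$ as a threshold size and to characterize the colorings of $K_{N,N}$ that avoid every target. Calling the first $t$ colors the \emph{star colors} and the last $s$ the \emph{matching colors}, with sides $L,R$, a coloring avoids all targets exactly when (a) every vertex is incident to at most $k_i-1$ edges in star color $i$, and (b) the subgraph in matching color $t+j$ has matching number at most $m_j-1$. The key tool is K\"onig's theorem: by (b) each matching color $t+j$ admits a vertex cover $C_j$ with $|C_j|\le m_j-1$, so $\sum_j|C_j|\le\Lambda$, and I would record $C_j^L=C_j\cap L$ and $C_j^R=C_j\cap R$.

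For the upper bound I would set $L'=L\setminus\bigcup_j C_j^L$ and $R'=R\setminus\bigcup_j C_j^R$. Every edge between $L'$ and $R'$ escapes all covers, hence carries a star color, so the complete bipartite graph $K_{|L'|,|R'|}$ is entirely star-colored. If $L'\ne\emptyset$, any $u\in L'$ sends $|R'|$ star-colored edges distributed among the star colors with at most $k_i-1$ in color $i$, so $|R'|\le\Sigma$, and symmetrically $|L'|\le\Sigma$ when $R'\ne\emptyset$. When both sides are nonempty I would combine $N\le\Sigma+\sum_j|C_j^R|$ and $N\le\Sigma+\sum_j|C_j^L|$ and add to get $2N\le 2\Sigma+\sum_j|C_j|\le 2\Sigma+\Lambda$, i.e.\ $N\le\Sigma+\lfloor\Lambda/2\rfloor$. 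When, say, $L'=\emptyset$, the whole left side is covered, so $N\le\sum_j|C_j^L|\le\Lambda$. Thus any target-free coloring obeys $N\le\max\{\Lambda,\ \Sigma+\lfloor\Lambda/2\rfloor\}$; since $\lfloor(\Lambda+1)/2\rfloor=\lceil\Lambda/2\rceil$ and $\lceil\Lambda/2\rceil+\lfloor\Lambda/2\rfloor=\Lambda$, this maximum equals $\Lambda$ precisely when $\Sigma<\lfloor(\Lambda+1)/2\rfloor$ and equals $\Sigma+\lfloor\Lambda/2\rfloor$ otherwise, which matches the two branches of the formula.

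For the lower bound I would exhibit target-free colorings of $K_{N,N}$ with $N=b-1$. In the first branch $N=\Lambda$: partition $L$ into blocks of sizes $m_1-1,\dots,m_s-1$ and color all edges leaving block $j$ with matching color $t+j$; each class then lies in $K_{m_j-1,\Lambda}$ of matching number $m_j-1$, so no $m_jK_2$ appears and no star color is used. In the second branch $N=\Sigma+\lfloor\Lambda/2\rfloor$: I would place disjoint covers using $\lceil\Lambda/2\rceil$ vertices on the left and $\lfloor\Lambda/2\rfloor$ on the right, give every cover-incident edge a matching color of one of its cover endpoints (preferring a left endpoint), so class $t+j$ stays covered by $C_j$ and has matching number at most $m_j-1$, and leave the edges between the two uncovered parts as a star-colored complete bipartite graph of maximum degree $N-\lfloor\Lambda/2\rfloor=\Sigma$. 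This residual graph I would properly edge-color into $\Sigma$ matchings by K\"onig's edge-coloring theorem and group them into blocks of sizes $k_1-1,\dots,k_t-1$, so that star color $i$ has maximum degree $k_i-1$ and no $K_{1,k_i}$ occurs.

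The main obstacle I anticipate is the upper bound, specifically the dichotomy producing the two branches: the clean averaging $2N\le 2\Sigma+\Lambda$ is available only while both uncovered parts $L',R'$ are nonempty, and one must separately treat the degenerate case where one side is fully covered, which is exactly where the bound $\Lambda$ supersedes $\Sigma+\lfloor\Lambda/2\rfloor$. A secondary technical point is checking that the balanced split of the cover budget into $\lceil\Lambda/2\rceil$ and $\lfloor\Lambda/2\rfloor$ is compatible with the prescribed cover sizes $m_j-1$ (which needs $\lceil\Lambda/2\rceil\le N$, guaranteed in the second branch) and that the leftover star graph really has maximum degree $\Sigma$, so that its decomposition into the $t$ star colors is feasible.
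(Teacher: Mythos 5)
The paper does not actually prove this lemma: it is imported verbatim from Raeisi \cite{8}, so there is no internal proof to compare against. Your argument is correct and complete on its own terms — the K\"onig-duality reduction of the matching constraints to vertex covers $C_j$ with $\sum_j|C_j|\le\Lambda$, the dichotomy between the averaging bound $2N\le 2\Sigma+\Lambda$ (both uncovered parts nonempty) and the bound $N\le\Lambda$ (one side fully covered), and the two extremal colorings all check out, including the boundary case $\Sigma=\lceil\Lambda/2\rceil$ where the two branches of the formula agree and the degenerate cases $m_j=1$ or $k_i=1$ where the corresponding color class is simply left empty. This is the standard cover-plus-edge-coloring argument one would expect to find in \cite{8}, so I would only add the explicit remark that a class covered by $C_j$ trivially has matching number at most $|C_j|$ (K\"onig is needed only in the converse direction, for the upper bound).
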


Combining Theorem \ref{d} and Lemma \ref{f}, we obtain the following.
\begin{thm}\label{Far}
Let $m_{1}, m_{2},\ldots,m_{s}$ and $k_{1}, k_{2}, \ldots, k_{t}$ be positive integers with $\Lambda=\sum_{i=1}^{s}(m_{i}-1)$ and $\sum=\sum_{i=1}^{t}(k_{i}-1)$. If $\Sigma \leq \lfloor (\Lambda+1)/2 \rfloor$ 
for $n> n_{1}(\Lambda)$,  then 
\[
R(C_n,K_{1,k_{1}},K_{1,k_{2}},\ldots,K_{1,k_{t}},m_{1}K_{2},m_{2}K_{2},\ldots,m_{s}K_{2})= n+\Lambda.
\]
\end{thm}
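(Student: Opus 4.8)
The plan is to prove the equality by establishing matching upper and lower bounds, both equal to $n+\Lambda$, with almost all of the heavy lifting delegated to the results already assembled in the excerpt.

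For the upper bound I would invoke Theorem \ref{d}, which gives $R(C_n,K_{1,k_{1}},\ldots,m_{s}K_{2})\leq n+b-1$ for $n>n_{1}(b)$, where $b$ is the bipartite Ramsey number evaluated in Lemma \ref{f}. The crucial point is that under the hypothesis $\Sigma\leq\lfloor(\Lambda+1)/2\rfloor$ one always has $b=\Lambda+1$. When $\Sigma<\lfloor(\Lambda+1)/2\rfloor$ this is the first branch of Lemma \ref{f} verbatim; in the boundary case $\Sigma=\lfloor(\Lambda+1)/2\rfloor$ the second branch applies and yields $b=\Sigma+\lfloor\Lambda/2\rfloor+1=\lfloor(\Lambda+1)/2\rfloor+\lfloor\Lambda/2\rfloor+1$, whereupon the elementary identity $\lfloor(\Lambda+1)/2\rfloor+\lfloor\Lambda/2\rfloor=\Lambda$ collapses this again to $b=\Lambda+1$. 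Substituting gives $R\leq n+(\Lambda+1)-1=n+\Lambda$.

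For the lower bound I would display an explicit $(1+t+s)$-coloring of $K_{n+\Lambda-1}$ avoiding every target. Partition the vertices into a set $A$ with $|A|=n-1$ and a set $B$ with $|B|=\Lambda$, and subdivide $B$ into blocks $B_{1},\ldots,B_{s}$ with $|B_{j}|=m_{j}-1$, which is possible because $\sum_{j}(m_{j}-1)=\Lambda$. Color every edge inside $A$ with the cycle color: this class is $K_{n-1}$ plus $\Lambda$ isolated vertices, so it contains no $C_{n}$. Color each remaining edge, necessarily incident to $B$, with the matching color $t+j$ for the least index $j$ with the edge meeting $B_{j}$. Then every edge of color $t+j$ meets $B_{j}$, so that class has a vertex cover of size $m_{j}-1$ and hence matching number at most $m_{j}-1<m_{j}$, ruling out $m_{j}K_{2}$. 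The $t$ star colors are simply left empty and trivially contain no $K_{1,k_{i}}$. This produces a good coloring of $K_{n+\Lambda-1}$, giving $R\geq n+\Lambda$.

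Most of the difficulty has already been absorbed into Theorems \ref{d} and Lemma \ref{f}, so the remaining work is largely bookkeeping. The one genuinely delicate step is the boundary case $\Sigma=\lfloor(\Lambda+1)/2\rfloor$ in the upper bound, where I must check that the two branches of Lemma \ref{f} agree and both return $b=\Lambda+1$; this rests entirely on the floor identity above. It is worth noting that the lower-bound construction never uses the star colors at all: the stars affect only the magnitude of $b$, hence whether the upper bound meets $n+\Lambda$, which is precisely the reason the hypothesis $\Sigma\leq\lfloor(\Lambda+1)/2\rfloor$ is imposed.
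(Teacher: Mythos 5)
Your proposal is correct and follows essentially the same route as the paper: the upper bound via Theorem \ref{d} combined with Lemma \ref{f}, and a lower bound on $K_{n+\Lambda-1}$ that puts a clique of order $n-1$ in the cycle color, covers each matching color $t+j$ by the block $B_j$ of size $m_j-1$ (exactly the paper's $\overline{K}_{n-1}+K_{m_j-1}$ plus $\overline{K}_{m_j-1}+\overline{K}_{\sum_{i>j}(m_i-1)}$ construction, described by your ``least index $j$'' rule), and leaves the star colors empty. Your explicit verification that the boundary case $\Sigma=\lfloor(\Lambda+1)/2\rfloor$ still yields $b=\Lambda+1$ via $\lfloor(\Lambda+1)/2\rfloor+\lfloor\Lambda/2\rfloor=\Lambda$ is a detail the paper leaves implicit, and is a welcome addition.
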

\begin{proof}
For the upper bound, we apply Theorem \ref{d} and Lemma \ref{f}.

\noindent For the lower bound color all edges of $G=K_{n-1}\cup \overline{K}_{\Lambda}$ by color $1$ and for all edges of $\overline{G}=\overline{K}_{n-1}+ K_{\Lambda}$ consider the following coloring.
Color $\overline{K}_{n-1}+ K_{m_{1}-1}$ and $\overline{K}_{m_{1}-1}+\overline{K}_{\sum_{i=2}^{s}(m_{i}-1)}$ by color $2$ 
and color $\overline{K}_{n-1}+ K_{m_{2}-1}$ and $\overline{K}_{m_{2}-1}+\overline{K}_{\sum_{i=3}^{s}(m_{i}-1)}$ by color $3$ and
in the general color $\overline{K}_{n-1}+ K_{m_{j}-1}$ and $\overline{K}_{m_{j}-1}+\overline{K}_{\sum_{i=j+1}^{s}(m_{i}-1)}$ by color $j+1$ and finally color $\overline{K}_{n-1}+ K_{m_{s}-1}$ by color $s$.
Then $G^{1}$ contains no copy of $C_{n}$, $G^{i}$ contains no copy of $m_{i}K_{2}$ for $1\leq i \leq s$, as desired.\\
\end{proof}

\begin{cor}\label{s}For given positive integers $m_{1}, m_{2},\ldots,m_{s}$ and $k_{1},k_{2},\ldots,k_{t}$ with $\Lambda=\sum_{i=1}^{s}(m_{i}-1)$, $\sum=\sum_{i=1}^{t}(k_{i}-1)$, sufficiently large $n$ and  $\Sigma \leq \lfloor (\Lambda+1)/2 \rfloor$, 
\[
R(P_{n},K_{1,k_{1}},K_{1,k_{2}},\ldots,K_{1,k_{t}},m_{1}K_{2},m_{2}K_{2},\ldots,m_{s}K_{2}) =n+\Lambda.
\]
\end{cor}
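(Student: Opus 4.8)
The plan is to mirror the structure of Theorem~\ref{Far} as closely as possible, since Corollary~\ref{s} is precisely the path-analogue of that cycle result. For the upper bound I would invoke Theorem~\ref{i} together with Lemma~\ref{f}: since $P_n$ appears in the first coordinate, Theorem~\ref{i} gives
\[
R(P_n,K_{1,k_1},\ldots,K_{1,k_t},m_1K_2,\ldots,m_sK_2)\leq 2b+n-2,
\]
where $b=b(K_{1,k_1},\ldots,K_{1,k_t},m_1K_2,\ldots,m_sK_2)$. Under the hypothesis $\Sigma\leq\lfloor(\Lambda+1)/2\rfloor$, Lemma~\ref{f} tells us that $b=\Lambda+1$ in the relevant regime, so this bound becomes $2(\Lambda+1)+n-2=n+2\Lambda$. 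This, however, is \emph{weaker} than the target $n+\Lambda$, so the naive application of Theorem~\ref{i} does not suffice.

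This is where the main obstacle lies, and the cleaner route is to piggyback directly on the cycle result. I would use the elementary inequality $R(P_{n_0},\ldots)\leq R(C_{n_0},\ldots)$ (valid because every copy of $P_{n}$ is contained in a copy of $C_{n}$, so a coloring avoiding $C_n$ a fortiori could still contain $P_n$ — more precisely, $P_n\subseteq C_n$ means forcing $C_n$ is harder, giving $R(P_n,\cdots)\le R(C_n,\cdots)$), exactly the trick used immediately after Theorem~\ref{q} to deduce the corollary there. Applying Theorem~\ref{Far} then yields
\[
R(P_n,K_{1,k_1},\ldots,m_sK_2)\leq R(C_n,K_{1,k_1},\ldots,m_sK_2)=n+\Lambda
\]
for sufficiently large $n$, which is the required upper bound and avoids the lossy factor of $2$.

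For the lower bound I would reuse verbatim the construction from the proof of Theorem~\ref{Far}. That construction colors the edges of $K_{n-1}\cup\overline{K}_\Lambda$ with color $1$ and distributes the edges of the complement among colors $2,\ldots,s+1$ so that color $1$ contains no $C_n$ and color $i$ contains no $m_iK_2$; crucially, the color-$1$ class on $n-1$ vertices (a disjoint union $K_{n-1}\cup\overline K_\Lambda$) contains no $P_n$ either, since a path on $n$ vertices needs $n$ vertices all lying in one connected component and the only nontrivial component has $n-1$ vertices. Thus the identical coloring witnesses $R(P_n,\ldots)>n+\Lambda-1$. The one point requiring a sentence of care is confirming that the star classes $K_{1,k_i}$ are also avoided in this construction under the hypothesis $\Sigma\le\lfloor(\Lambda+1)/2\rfloor$; this follows from the same degree bookkeeping that makes the construction work in Theorem~\ref{Far}, since the coloring there is designed to simultaneously avoid all the $K_{1,k_i}$ and $m_iK_2$ graphs whenever $b=\Lambda+1$. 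Combining the two bounds gives the stated equality.
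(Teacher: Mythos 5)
Your proposal is correct and matches the paper's (implicit) argument: the corollary is deduced from Theorem~\ref{Far} via the monotonicity $R(P_n,\ldots)\leq R(C_n,\ldots)$, with the same lower-bound coloring, whose color-$1$ class $K_{n-1}\cup\overline{K}_{\Lambda}$ has no component on $n$ vertices and hence no $P_n$. Your remark about the star colors is handled even more simply than you suggest, since those color classes are empty in the construction and so trivially contain no $K_{1,k_i}$.
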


\medskip
\subsection{$R(C_n,kK_{2},kK_{2})$ and $R(P_n,kK_{2},kK_{2})$ for large $n$}

\begin{lem}[\cite{4}]\label{h}
$b(mK_{2}, nK_{2})=m+n-1$.
\end{lem}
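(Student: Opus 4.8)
Lemma (final statement, \ref{h}): $b(mK_2, nK_2) = m+n-1$.

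The plan is to prove this via matching the bipartite Ramsey number from both sides. Recall that $b(mK_2, nK_2)$ is the least $b$ such that every red/green coloring of $E(K_{b,b})$ yields either a red $mK_2$ (a red matching of size $m$) or a green $nK_2$ (a green matching of size $n$). So I need an upper bound (every coloring of $K_{b,b}$ with $b=m+n-1$ forces one of the two matchings) and a lower bound (a coloring of $K_{m+n-2,m+n-2}$ avoiding both).

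\textbf{Lower bound.} First I would construct a coloring of $K_{m+n-2,m+n-2}$ with no red $mK_2$ and no green $nK_2$. The natural idea is to split one side of the bipartition. Write the left vertex class $X$ as a disjoint union $X = A \cup B$ with $|A| = m-1$ and $|B| = n-1$, and let $Y$ be the right class (also of size $m+n-2$). Color every edge from $A$ to $Y$ red and every edge from $B$ to $Y$ green. Then a red matching can use at most $|A| = m-1$ edges (each red edge meets $A$, and $A$ has only $m-1$ vertices), so there is no red $mK_2$; symmetrically every green edge meets $B$, so a green matching has at most $n-1$ edges and there is no green $nK_2$. This shows $b(mK_2,nK_2) \ge m+n-1$.

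\textbf{Upper bound.} For this direction I would take an arbitrary red/green coloring of $K_{b,b}$ with $b = m+n-1$ and show it contains a red $mK_2$ or a green $nK_2$. The cleanest approach is via K\"onig's theorem on the red subgraph: let $\nu_r$ denote the size of a maximum red matching. If $\nu_r \ge m$ we are done, so assume $\nu_r \le m-1$. By K\"onig's theorem the red bipartite subgraph has a vertex cover $C$ of size $\nu_r \le m-1$; every red edge is incident to $C$. The vertices outside $C$ on each side span a complete bipartite graph all of whose edges are green. Quantitatively, at least $b - |C| \ge (m+n-1)-(m-1) = n$ vertices on the left and at least $n$ on the right lie outside $C$, and all $n \times n$ edges between these uncovered sets are green, which certainly contains a green matching of size $n$, i.e.\ a green $nK_2$. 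Thus every coloring yields one of the two monochromatic matchings, giving $b(mK_2,nK_2)\le m+n-1$.

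Combining the two bounds gives the claimed equality. The main obstacle, and the only place requiring genuine care, is the upper bound: one must argue that avoiding a large red matching forces a large all-green complete bipartite block, and the K\"onig vertex-cover argument is what makes this precise. I would double-check the counting on the uncovered sides (that deleting a cover of size $\le m-1$ from each class of size $m+n-1$ leaves at least $n$ vertices per side), since an off-by-one there would break the bound; everything else is routine.
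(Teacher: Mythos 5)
Your proof is correct. Note, however, that the paper does not prove this statement at all: Lemma~\ref{h} is quoted from \cite{4} as a known result, so there is no in-paper argument to compare against. Your two halves are both sound and are essentially the standard proof of this fact. The lower-bound coloring of $K_{m+n-2,m+n-2}$ (partition one side into $A$ with $|A|=m-1$ carrying all red edges and $B$ with $|B|=n-1$ carrying all green edges) correctly caps the red and green matching numbers at $m-1$ and $n-1$ respectively. For the upper bound, the K\"onig argument is clean: a red vertex cover of size $\nu_r\le m-1$ leaves at least $(m+n-1)-(m-1)=n$ uncovered vertices on each side of $K_{m+n-1,m+n-1}$, all edges between which are green, yielding a green $K_{n,n}\supseteq nK_2$; the off-by-one you flag is indeed the only delicate point and your count is right. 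The only cosmetic caveat is that $b(\cdot,\cdot)$ here is symmetric in the two colors, so your ``either red $mK_2$ or green $nK_2$'' convention matches the definition in the paper; with that, the argument is a complete, self-contained proof of the cited lemma.
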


\begin{thm}\label{h1}
$R(C_n,kK_{2},kK_{2})= n+2k-2$, for sufficiently large $n$.
\end{thm}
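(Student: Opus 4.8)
The plan is to prove $R(C_n,kK_2,kK_2)=n+2k-2$ by establishing matching upper and lower bounds, following exactly the template already used in Theorems~\ref{q} and~\ref{p}. For the upper bound I would invoke the general machinery built up earlier: by Theorem~\ref{SS} we have $R(C_n,kK_2,kK_2)\le R(C_n,K_{b,b})$ where $b=b(kK_2,kK_2)$, and Lemma~\ref{h} gives $b(kK_2,kK_2)=2k-1$. Since $n$ is sufficiently large, Theorem~\ref{M} (with $t=1$, $r=b$) applies to yield $R(C_n,K_{b,b})=n+b-1=n+(2k-1)-1=n+2k-2$. This immediately gives $R(C_n,kK_2,kK_2)\le n+2k-2$.

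For the lower bound I would exhibit a $3$-coloring of $K_{n+2k-3}$ with no blue $C_n$, no red $kK_2$, and no green $kK_2$. The natural construction, mirroring the lower-bound arguments in the preceding proofs, is to take $G=K_{n-1}\cup \overline{K}_{2k-2}$ and color all its edges blue; then the blue graph has no component on $n$ vertices other than the clique $K_{n-1}$, so it contains no copy of $C_n$. The complement is $\overline{G}=\overline{K}_{n-1}+K_{2k-2}$, whose edges I would split evenly between red and green. Concretely, partition the $2k-2$ extra vertices into two blocks, or more carefully distribute the join-edges, so that each of the red and green subgraphs has at most $2k-2$ vertices incident to edges and hence a maximum matching of size at most $k-1$; this forces neither color to contain $kK_2$.

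The step I expect to be the main obstacle is verifying the matching bound in the lower-bound coloring: I need to guarantee that both the red and the green subgraph have matching number strictly less than $k$. A matching of size $k$ requires $2k$ vertices of positive degree, so it suffices to arrange the coloring of $\overline{G}$ so that each color class touches at most $2k-1$ vertices, or equivalently spans a graph with a vertex cover of size at most $k-1$. Since $K_{2k-2}$ together with the join edges involves only $2k-2$ ``central'' vertices against $n-1$ independent vertices, covering all non-blue edges by the $2k-2$ central vertices and splitting these into a red half and a green half of size $k-1$ each should do the job: any edge of $\overline G$ meets the central set, so assigning each central vertex's incident non-blue edges entirely to one color confines each color's edges to a star-forest rooted at $k-1$ centers, whose matching number is at most $k-1$. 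I would then check this distribution is consistent (every non-blue edge gets exactly one color) and conclude that $K_{n+2k-3}$ admits a good coloring, so $R(C_n,kK_2,kK_2)\ge n+2k-2$, completing the proof.
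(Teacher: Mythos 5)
Your proof is correct, and while your upper bound is exactly the paper's (Theorem~\ref{SS} plus Lemma~\ref{h} to get $b=b(kK_2,kK_2)=2k-1$, then Theorem~\ref{M} to get $R(C_n,K_{b,b})=n+2k-2$), your lower-bound construction genuinely differs from the one in the paper. The paper takes $G=K_{n+k-2}\cup\overline{K}_{k-1}$, uses the known value $R(C_n,kK_2)=n+k-1$ from \cite{30} as a black box to two-color $K_{n+k-2}$ with no blue $C_n$ and no red $kK_2$, and then colors all of $\overline{G}=\overline{K}_{n+k-2}+K_{k-1}$ green, so that the green graph has a vertex cover of size $k-1$ and hence no $kK_2$; only one of the two matching colors needs an explicit cover argument. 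You instead take $G=K_{n-1}\cup\overline{K}_{2k-2}$ entirely blue and split $\overline{G}=\overline{K}_{n-1}+K_{2k-2}$ between red and green by partitioning the $2k-2$ central vertices into two blocks of size $k-1$, giving each color class a vertex cover of size $k-1$. Your construction is self-contained (it does not need the two-color result $R(C_n,kK_2)=n+k-1$), at the cost of having to verify the edge-assignment consistency you flag: an edge between a red center and a green center is ``claimed'' by both blocks, but this is harmless --- assign any such edge to either color, since it is then covered by a center of that color, so both the red and the green graphs still have vertex covers of size $k-1$ and matching number at most $k-1$. Both constructions sit on $n+2k-3$ vertices and both are valid; the paper's is shorter because it delegates one color to a known extremal coloring, while yours is more elementary and symmetric in the two matching colors.
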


\begin{proof}
Theorems \ref{SS},\ref{M} and Lemma \ref{h} give us the desired upper bound.

In \cite{30} it is shown that $R(C_{n}, kK_{2})=n+k-1$ for $k \leq \lfloor \frac{n}{2} \rfloor$. Consider the graph $G=K_{n+k-2} \cup \overline{K}_{k-1}$. There is two-coloring of $K_{n+k-2}$ such that $G^{b}$ contains no copy of $C_{n}$ and $G^{r}$ contains no copy of $kK_{2}$. Next color $\overline{G}=\overline{K}_{n+k-2}+K_{k-1}$ with color green. The
theorem follows.
\end{proof}

In \cite{5} Maherani \emph{et al.} proved that $R(P_{3}, kK_{2}, tK_{2})=2k+t-1$  for $k\geq t\geq 3$. 

\begin{thm}
$R(P_n,kK_{2},kK_{2})= n+2k-2$,  for sufficiently large $n$.
\end{thm}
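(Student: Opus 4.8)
The plan is to prove $R(P_n,kK_2,kK_2)=n+2k-2$ for large $n$ by establishing matching upper and lower bounds, closely following the template of Theorem \ref{h1}. For the upper bound, I would invoke Theorem \ref{i}, which states that $R(P_n,G_1,\ldots,G_k)\leq 2b+n-2$ where $b=b(G_1,\ldots,G_k)$. Here the two non-path colors are both $kK_2$, so by Lemma \ref{h} we have $b=b(kK_2,kK_2)=k+k-1=2k-1$. Substituting gives $R(P_n,kK_2,kK_2)\leq 2(2k-1)+n-2 = n+4k-4$, which is \emph{not} tight. So the naive application of Theorem \ref{i} is too weak, and the main obstacle is obtaining the sharp upper bound $n+2k-2$. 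This suggests that a direct combinatorial argument is needed rather than a black-box bipartite-Ramsey reduction, or that one must use a sharper structural result about paths and matchings.

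For the lower bound, which matches the pattern used throughout the paper, I would exhibit a $3$-coloring of $K_{n+2k-3}$ with no blue $P_n$, no red $kK_2$, and no green $kK_2$. The natural construction is to take $G=K_{n+k-2}\cup \overline{K}_{k-1}$ as in Theorem \ref{h1}. Using the known value $R(P_n,kK_2)=n+k-1$ (from \cite{30}, paralleling $R(C_n,kK_2)=n+k-1$), there is a two-coloring of $K_{n+k-2}$ with no blue $P_n$ and no red $kK_2$. Then coloring the complementary graph $\overline{G}=\overline{K}_{n+k-2}+K_{k-1}$ green, I would check that the green graph—being a join of an independent set with a clique on $k-1$ vertices—has no $kK_2$, since any matching in such a join can use at most $k-1$ independent edges (each green edge must touch the $K_{k-1}$ part, and a green matching of size $k$ would require $k$ distinct vertices in a set of size $k-1$). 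This confirms $R(P_n,kK_2,kK_2)\geq n+2k-2$.

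The crux is therefore the upper bound. The most promising route is to prove it directly: given a $3$-coloring of $K_{n+2k-2}$ with no red $kK_2$ and no green $kK_2$, I would show a blue $P_n$ must appear. By the Gallai--Edmonds or König-type bound, a graph with no matching of size $k$ has a vertex cover of size at most $2(k-1)$; more usefully, if the red graph has no $kK_2$ then its edges are covered by at most $k-1$ vertices (by a theorem on matching number versus a small ``cover set''), and likewise for green. Removing these at most $2(k-1)$ ``heavy'' vertices from the $n+2k-2$ vertices leaves at least $n-2$ vertices whose mutual edges are all blue (any edge among them is neither red nor green, since all red and green edges meet the removed cover sets). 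Wait---this leaves a blue clique on $n-2$ vertices, which only guarantees $P_{n-2}$, so I would need to carefully reincorporate the removed vertices to extend the path by two more vertices, using that each removed vertex still has blue edges into the large blue clique. The main obstacle is making this covering argument precise and verifying that the path can be extended to full length $n$; I expect this to rely on the exact structure of matching-free graphs (covers of size $k-1$) rather than the weaker $2(k-1)$ bound, and on a short path-extension lemma. Once the covering structure is pinned down, the extension is routine.
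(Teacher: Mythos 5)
Your lower bound is exactly the paper's: the coloring $G=K_{n+k-2}\cup \overline{K}_{k-1}$ based on $R(P_n,kK_2)=n+k-1$, with the complement colored green, and your verification that the join $\overline{K}_{n+k-2}+K_{k-1}$ contains no $kK_2$ is correct. The problem is the upper bound. You correctly observe that Theorem \ref{i} only yields $n+4k-4$, but you then miss the paper's one-line fix: since $P_n\subseteq C_n$, we have $R(P_n,kK_2,kK_2)\leq R(C_n,kK_2,kK_2)$, and the latter equals $n+2k-2$ by Theorem \ref{h1} (which combines Theorem \ref{SS}, the Bondy--Erd\H{o}s result $R(C_n,K_{b,b})=n+b-1$ for large $n$ from Theorem \ref{M}, and $b(kK_2,kK_2)=2k-1$ from Lemma \ref{h}). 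The whole point of passing to the cycle is that $R(C_n,K_{b,b})$ grows like $n+b$, whereas the path--complete-bipartite bound of Theorem \ref{o} only gives $n+2b$; this is what makes the bound tight.

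The direct argument you propose instead has a genuine flaw. A graph with no matching of size $k$ need \emph{not} have a vertex cover of size $k-1$: the graph $K_{2k-1}$ has matching number $k-1$ but vertex cover number $2k-2$, and K\H{o}nig's theorem does not apply to non-bipartite graphs. So the red and green graphs cannot in general be "covered by at most $k-1$ vertices each," and with the correct worst-case bound of $2(k-1)$ vertices per color you would delete up to $4k-4$ vertices from $n+2k-2$, leaving only a blue clique on $n-2k+2$ vertices --- far too short, and no short path-extension lemma will recover $2k-2$ missing vertices. Salvaging this route would require the Gallai--Edmonds/Berge--Tutte structure of matching-critical graphs and a substantially more careful analysis; the reduction to the cycle case avoids all of it.
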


\begin{proof}
Clearly $R(P_n,kK_{2},kK_{2})\leq R(C_{n},kK_{2},kK_{2})$. Theorem \ref{h1} gives us the upper bound.

In \cite{30} it is shown that $R(P_{n}, kK_{2})=n+k-1$ for $k \leq \lfloor \frac{n}{2} \rfloor$. We obtain the lower bound by considering the same coloring as in the proof of Theorem \ref{h1}.  
\end{proof}

\bigskip
\subsection{Multicolor Ramsey number for stripe versus path and stars}

\begin{lem}[\cite{8}]\label{RE}
For positive integers $m, k_{1},\ldots,k_{r}\geq 2$ and $\sum=\sum_{i=1}^{r}(k_{i}-1)$, 
\[
b(P_{m},K_{1,k_{1}},\ldots,K_{1,k_{r}})=
\begin{cases}
\Sigma + \frac{m}{2}  & \text{if } \Sigma \geq \frac{m}{2} , \text{$m$  even,}\\
\Sigma + \frac{m+1}{2}& \text{if } \Sigma \geq \frac{m-1}{2}, \text{$m$ odd, } \Sigma \equiv 0 \text{ mod}(\frac{m-1}{2}),\\
\Sigma+ \frac{m-1}{2} & \text{if } \Sigma \geq \frac{m-1}{2}, \text{$m$ odd, } \Sigma \not \equiv 0 \text{ mod}(\frac{m-1}{2}),\\
2\Sigma+1 & \text{if }  \frac{1}{2}\lfloor \frac{m}{2} \rfloor +1 \leq \Sigma < \lfloor\frac{m}{2} \rfloor +1,\\
\lfloor \frac{m+1}{2} \rfloor  & \text{if } \Sigma < \frac{1}{2}\lfloor m/2 \rfloor.
\end{cases}
\]
\end{lem}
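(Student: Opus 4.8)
The plan is to translate this multicolour bipartite Ramsey problem into a single statement about minimum degree and long paths in one balanced bipartite graph. Colour the edges of $K_{b,b}$ with a ``path colour'' $0$ (carrying $P_m$) together with ``star colours'' $1,\dots,r$, and let $G^{0}$ be the colour-$0$ subgraph. If colour $i$ contains no $K_{1,k_{i}}$ for every $i\ge 1$, then each vertex is incident with at most $k_{i}-1$ edges of colour $i$, hence with at most $\Sigma=\sum_{i}(k_{i}-1)$ edges in the star colours in total; since every vertex of $K_{b,b}$ has degree $b$, this forces $\delta(G^{0})\ge b-\Sigma$. So the \textbf{upper bound} reduces to: a balanced bipartite graph on parts of size $b$ with $\delta\ge b-\Sigma$ must contain $P_m$. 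The \textbf{lower bound} reduces to exhibiting, on parts of size $b-1$, a $P_m$-free graph $G^{0}$ whose complement $H$ satisfies $\Delta(H)\le\Sigma$ (equivalently $\delta(G^{0})\ge(b-1)-\Sigma$); one then edge-colours $H$ with $\Delta(H)\le\Sigma$ matchings by König's theorem and groups them into bundles of sizes $k_{1}-1,\dots,k_{r}-1$, so colour $i$ has maximum degree $k_{i}-1$ and no $K_{1,k_{i}}$.

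For the upper bound I would use the elementary fact that in a bipartite graph a longest path $v_{0}\cdots v_{p}$ has both endpoints with all neighbours on the path, and the neighbours of $v_{0}$ occupy odd positions, so $\delta\ge d$ produces a path on at least $2d$ vertices. With $d=b-\Sigma$ this already settles the two ``even-type'' cases: for even $m$ with $\Sigma\ge m/2$ one gets $2(b-\Sigma)=m$ at $b=\Sigma+m/2$, and for odd $m=2\ell+1$ with $\Sigma\equiv 0\pmod{\ell}$ one has $\delta\ge\ell+1$ at $b=\Sigma+\ell+1=\Sigma+(m+1)/2$, giving a path on $2\ell+2\ge m$ vertices. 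In the two small/middle regimes the graph is instead forced to be (near) Hamiltonian: when $b=2\Sigma+1$ we have $\delta\ge\Sigma+1>b/2$, so by the Moon--Moser theorem $K_{2\Sigma+1,2\Sigma+1}$ is Hamiltonian and contains every $P_m$ that fits, and likewise for $\Sigma<\tfrac12\lfloor m/2\rfloor$ the minimum degree exceeds half the part size, yielding a Hamiltonian path on $2\lceil m/2\rceil\ge m$ vertices. The genuinely sharp case is odd $m$ with $\ell\nmid\Sigma$, where $b=\Sigma+\ell$ and $\delta\ge\ell$ only delivers $P_{2\ell}$ from the crude bound, one vertex short of the required $P_{2\ell+1}$; here I would run a P\'osa-type rotation/extension argument, using that the part size $b=\Sigma+\ell$ is \emph{not} a multiple of $\ell$ to show that a maximal path on exactly $2\ell$ vertices forces a vertex outside a would-be union of $K_{\ell,\ell}$ blocks and hence a crossing edge that lengthens the path.

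For the lower bound the building block is a disjoint union of complete bipartite graphs: a copy of $K_{s,t}$ with $\min(s,t)\le\lfloor m/2\rfloor$ (and, for odd $m$, with $s=t=\ell$) has longest path on at most $m-1$ vertices, hence is $P_m$-free, while keeping each vertex of degree at least the required $(b-1)-\Sigma$. I would tile both sides of $K_{b-1,b-1}$ by such blocks. This is exactly where the divisibility hypotheses enter: when $\ell\mid\Sigma$ the parts of size $b-1=\Sigma+\ell$ tile perfectly into $K_{\ell,\ell}$ blocks (a $P_m$-free, $\ell$-regular colour-$0$ graph), pushing $b$ up to $\Sigma+\ell+1$, whereas when $\ell\nmid\Sigma$ no balanced block-tiling of that size stays $P_m$-free and the true value drops to $\Sigma+\ell$. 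In the middle regime $b=2\Sigma+1$ the construction is simply $G^{0}=K_{\Sigma,\Sigma}\sqcup K_{\Sigma,\Sigma}$ on $K_{2\Sigma,2\Sigma}$; and for small $\Sigma$ the graph $K_{b-1,b-1}$ has fewer than $m$ vertices altogether, so colouring everything with colour $0$ trivially avoids both $P_m$ and all stars.

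The step I expect to be the real obstacle is the matched pair of sharp statements in the odd-$m$, $\ell\nmid\Sigma$ case: proving that the threshold $\delta\ge\ell$ forces a path one vertex longer than the naive estimate, and that at the next smaller part size a $P_m$-free colouring with the correct complement degrees genuinely exists. Making the rotation argument and the extremal tiling meet exactly --- so that the upper and lower bounds coincide in each residue class of $\Sigma$ modulo $\lfloor m/2\rfloor$ or $(m-1)/2$ --- is the delicate heart of the argument, and is presumably why the statement splits into five cases.
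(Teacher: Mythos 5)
The paper gives no proof of this lemma at all --- it is quoted from Raeisi \cite{8} --- so there is no in-paper argument to measure yours against; I can only judge the outline on its own merits. It is sound, and it follows what is essentially the standard (and, as far as I can tell, the cited source's) route: forbidding $K_{1,k_i}$ in colour $i$ forces $\delta(G^{0})\ge b-\Sigma$, and conversely K\"onig's edge-colouring theorem turns any $P_m$-free bipartite graph whose bipartite complement has maximum degree at most $\Sigma$ back into a good colouring, so both bounds reduce to minimum degree versus longest paths in a balanced bipartite graph. The maximal-path endpoint count giving $P_{2\delta}$, the Moon--Moser threshold in the two low-$\Sigma$ regimes, the rotation argument showing that $\delta\ge\ell$ with no $P_{2\ell+1}$ forces every component to be a $K_{\ell,\ell}$ (hence $\ell\mid b$, contradicting $\ell\nmid\Sigma$ when $b=\Sigma+\ell$), and the disjoint complete-bipartite tilings for the lower bounds all check out. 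Two small repairs: your blanket claim that $K_{s,t}$ with $\min(s,t)\le\lfloor m/2\rfloor$ is $P_m$-free fails for even $m$, since $K_{m/2,m/2}$ has a spanning path on $m$ vertices; the blocks must satisfy $\min(s,t)\le m/2-1$ in that case, e.g.\ $K_{m/2-1,\Sigma}\cup K_{\Sigma,m/2-1}$ for the first clause. Also be aware that the statement as transcribed is internally inconsistent at $\Sigma=m/2$ for even $m$, where the first and fourth cases overlap and give $\Sigma+m/2=m$ and $2\Sigma+1=m+1$ respectively; your own argument correctly yields $b=m$ there, so the fourth clause must exclude that boundary point --- a defect of the statement, not of your proof.
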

\begin{thm}For $ \Sigma< \frac{1}{2}\lfloor m/2 \rfloor$, even $m=2s$ and  $2 \leq t\leq s$,
\[
R(tK_{2},P_{m},K_{1,k_{1}},\ldots,K_{1,k_{r}})=m+t-1. 
\]
\end{thm}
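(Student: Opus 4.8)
The plan is to sandwich $R(tK_{2},P_{m},K_{1,k_{1}},\ldots,K_{1,k_{r}})$ between matching upper and lower bounds of $m+t-1$, each of which follows almost immediately from results already available in the excerpt.

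For the upper bound I would apply Theorem~\ref{c} to the bipartite graphs $P_{m},K_{1,k_{1}},\ldots,K_{1,k_{r}}$, whose governing parameter is $b=b(P_{m},K_{1,k_{1}},\ldots,K_{1,k_{r}})$. The first task is to read off $b$ from Lemma~\ref{RE}: the hypothesis $\Sigma<\frac12\lfloor m/2\rfloor$ places us in the last branch, giving $b=\lfloor (m+1)/2\rfloor$, and since $m=2s$ is even this is exactly $b=s$. The second task is to check which case of Theorem~\ref{c} applies: because $2\le t\le s=b$ we are in the regime $t\le b$, so Theorem~\ref{c} yields $R\le 2b+t-1=2s+t-1=m+t-1$.

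For the lower bound I would exhibit an explicit $(r+2)$-coloring of $K_{m+t-2}$ with no forbidden monochromatic subgraph, in the style of the earlier proofs. Partition the $m+t-2$ vertices into a set $A$ of size $t-1$ and a set $B$ of size $m-1$. Color every edge meeting $A$ with the stripe color: this color class is covered by the $t-1$ vertices of $A$, so its matching number is at most $t-1$ and it contains no $tK_{2}$. Color every edge inside $B$ with the path color: this class is a $K_{m-1}$, whose longest path is $P_{m-1}$, so it contains no $P_{m}$. Every edge of $K_{m+t-2}$ is now colored, and the $r$ star colors receive no edges at all, so they trivially contain no $K_{1,k_{i}}$. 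Hence $R>m+t-2$. Equivalently, this is just the extremal coloring witnessing $R(P_{m},tK_{2})=m+t-1$, which is valid precisely because $t\le \lfloor m/2\rfloor=s$, with the star colors left empty.

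Both bounds being $m+t-1$, the equality follows. I do not expect a genuine obstacle here; the only points that need care are bookkeeping. Specifically, one must correctly identify which branch of the five-case Lemma~\ref{RE} the hypothesis $\Sigma<\frac12\lfloor m/2\rfloor$ selects, and then confirm that the inequality $t\le s$ is exactly what forces the $t\le b$ branch of Theorem~\ref{c}. This same inequality is what lets the stars be ignored in the construction, since once the path and stripe colors saturate the bound the star classes contribute nothing to either direction.
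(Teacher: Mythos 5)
Your proposal is correct and follows essentially the same route as the paper: the upper bound via Theorem~\ref{c} with $b=b(P_m,K_{1,k_1},\ldots,K_{1,k_r})=\lfloor(m+1)/2\rfloor=s$ from the last branch of Lemma~\ref{RE} (using $t\le s=b$), and the lower bound via the same two-part coloring of $K_{m+t-2}$ with a $(t-1)$-vertex cover for the stripe color and a $K_{m-1}$ for the path color, leaving the star colors empty. Your write-up is in fact more explicit than the paper's about why each color class avoids its target graph.
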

\begin{proof}
Suppose that $ \Sigma < \frac{1}{2}\lfloor m/2 \rfloor$,  $m=2s$ is even and  $2 \leq t\leq s$.  By Theorem \ref{c} and Lemma \ref{RE} we have
$R(tK_{2},P_{m},K_{1,k_{1}},\ldots,K_{1,k_{r}})\leq m+t-1 $. To obtain the lower bound, divide the vertex set of $G=K_{m+t-2}$ into two parts $A$ and $B$, where
$|A|=m-1$ and $|B|=t-1$. Next, color the edges of $G[A]$ with the second color and other edges with the first color.
\end{proof}

\bigskip
\subsection{$R(tK_{2},P_{3},C_{2n})$ for $t \leq n$}
\begin{thm}[\cite{15}]
$R(P_{3},C_{2n})=2n$.
\end{thm}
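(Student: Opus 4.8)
The plan is to exploit the elementary fact that forbidding a monochromatic $P_{3}$ in one color forces that color class to have maximum degree at most $1$, i.e. to be a matching, and then to reduce the whole statement to a Hamiltonicity question. First I would dispatch the lower bound, which is immediate: the graph $C_{2n}$ has $2n$ vertices, so $K_{2n-1}$ cannot contain it at all. Coloring every edge of $K_{2n-1}$ with the cycle color therefore produces no copy of $C_{2n}$, while the other color class is empty and so trivially contains no $P_{3}$. Hence $R(P_{3},C_{2n})\geq 2n$.

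For the upper bound I would take an arbitrary $2$-coloring of the edges of $K_{2n}$ and suppose, toward a contradiction, that the first color contains no $P_{3}$. Since a graph with no $P_{3}$ has maximum degree at most $1$, the first color class is a matching $M$, so every vertex is incident to at most one edge of that color. The second color class is then exactly $K_{2n}\setminus M$. Because deleting a matching removes at most one edge at each vertex, the minimum degree of $K_{2n}\setminus M$ is at least $(2n-1)-1=2n-2$. The goal is to show that this graph contains a Hamiltonian cycle, which, as it has $2n$ vertices, is precisely a copy of $C_{2n}$ in the second color.

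The key step is to invoke Dirac's theorem: a graph on $N$ vertices with minimum degree at least $N/2$ is Hamiltonian. Here $N=2n$, so the threshold is $N/2=n$, and I would verify that $2n-2\geq n$ holds for all $n\geq 2$. Consequently $K_{2n}\setminus M$ is Hamiltonian and yields the desired $C_{2n}$, giving $R(P_{3},C_{2n})\leq 2n$ and hence equality. The case $n=1$ is degenerate, since $C_{2}$ is not a simple cycle, so one assumes $n\geq 2$ throughout.

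The proof has essentially no hard part once the $P_{3}$-free reduction is made; the only place demanding care is confirming that the minimum-degree bound $2n-2$ actually meets Dirac's threshold $n$, which fails only in the trivial regime $n\leq 1$. If one preferred not to cite Dirac, the main obstacle would become constructing the Hamiltonian cycle of $K_{2n}\setminus M$ directly --- for example by ordering the vertices so that the matching edges are spread apart, or via an Ore-type rotation-extension argument --- but appealing to Dirac's theorem makes this unnecessary.
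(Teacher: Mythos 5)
Your proof is correct: the lower bound from $K_{2n-1}$ is immediate, and the upper bound correctly reduces a $P_3$-free color class to a matching $M$, after which $K_{2n}\setminus M$ has minimum degree $2n-2\geq n$ and Dirac's theorem supplies the Hamiltonian cycle $C_{2n}$ (for $n\geq 2$, as you note). The paper itself gives no proof of this statement --- it is quoted from Radziszowski's survey \cite{15} --- so there is no argument to compare against; your self-contained derivation is a standard and valid way to establish it.
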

\begin{thm}For positive integers $t \leq n$, $R(tK_{2},P_{3},C_{2n})= 2n+t-1$.
\end{thm}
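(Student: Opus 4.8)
The plan is to mirror the strategy used for Theorems~\ref{p} and~\ref{h1}: bound $R(tK_2,P_3,C_{2n})$ from above through the bipartite Ramsey number of the two bipartite targets $P_3$ and $C_{2n}$ via Theorem~\ref{c}, and exhibit a matching extremal colouring from below. The only ingredient that is not already tabulated in the excerpt (Theorem~\ref{t}, Lemmas~\ref{f},~\ref{h},~\ref{RE}) is the value $b(P_3,C_{2n})=n$, so my first task would be to pin this down.

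First I would establish $b(P_3,C_{2n})=n$. For the inequality $b(P_3,C_{2n})\ge n$ one can either note that $K_{n-1,n-1}$ has only $2n-2<2n$ vertices, so colouring all of its edges with the $C_{2n}$-colour avoids both a $P_3$ in the other colour and a $C_{2n}$; or invoke the inequality $R(G_1,\dots,G_k)\le 2b(G_1,\dots,G_k)$ stated in the text together with the quoted value $R(P_3,C_{2n})=2n$. For the reverse inequality $b(P_3,C_{2n})\le n$, take any $2$-colouring of $K_{n,n}$ that has no $P_3$ in the first colour; then that colour class is a matching $M$, and the second colour class is $K_{n,n}-M$, which I claim contains $C_{2n}$. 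This is the step I expect to be the real obstacle, since it is the only place a genuine structural fact is needed: $K_{n,n}$ minus a matching has minimum degree $\ge n-1>n/2$, so a bipartite Dirac/Moon--Moser condition forces a Hamilton cycle, hence a $C_{2n}$, for $n\ge 3$. The small cases behave differently (e.g.\ $K_{2,2}$ minus a perfect matching is $2K_2$), so the statement should be read for $n\ge 3$.

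With $b=b(P_3,C_{2n})=n$ in hand, the upper bound is immediate from Theorem~\ref{c} applied to the bipartite graphs $P_3$ and $C_{2n}$: the hypothesis $t\le n$ is exactly the case $t\le b$, giving $R(tK_2,P_3,C_{2n})\le 2b+t-1=2n+t-1$. For the matching lower bound I would colour $K_{2n+t-2}$ as follows: split the vertices into $A$ with $|A|=2n-1$ and $B$ with $|B|=t-1$, colour every edge inside $A$ with the $C_{2n}$-colour, and every edge meeting $B$ with the $tK_2$-colour (the $P_3$-colour may be left empty). Then the $C_{2n}$-colour lives on only $2n-1<2n$ vertices and so contains no $C_{2n}$; every edge of the $tK_2$-colour is incident to $B$, so its matching number is at most $|B|=t-1<t$ and there is no $tK_2$; and there is no $P_3$ in the unused colour. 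Hence $R(tK_2,P_3,C_{2n})\ge 2n+t-1$, which together with the upper bound proves the equality. As a consistency check, the endpoint $t=n$ reproduces Corollary~\ref{AA} with $R(P_3,C_{2n})=2n=2b$, yielding $R(nK_2,P_3,C_{2n})=3n-1$.
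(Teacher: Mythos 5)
Your proof is correct and takes essentially the same route as the paper: the upper bound comes from Theorem \ref{c} with $b(P_3,C_{2n})=n$ and $t\le b$, and your lower-bound colouring (all of a $(2n-1)$-set in the $C_{2n}$-colour, all edges meeting the remaining $t-1$ vertices in the $tK_2$-colour) is exactly an instance of the paper's construction $K_{2n-1}\cup\overline{K}_{t-1}$. The one genuine addition is that you actually verify $b(P_3,C_{2n})=n$ via Moon--Moser, where the paper only asserts it is ``easy to see'' for $n\ge 2$; your remark that the identity fails at $n=2$ (since $K_{2,2}$ minus a nonempty matching has no $C_4$) is a correct and useful caveat.
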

\begin{proof}
It is easy to see that $b(P_{3},C_{2n})=n$ for $n\geq 2$.
By Theorem \ref{c} we have $R(tK_{2},C_{2n},P_{3})\leq 2n+t-1$ where $t \leq n$.
To see the lower bound, assume that $G=K_{2n-1}\cup \overline{K}_{t-1}$.
It is clear that there is a two coloring (blue-red) of $E(K_{2n-1})$ such that there is no blue copy $C_{2n}$ and no red copy $P_{3}$. We take this coloring and next we color the edges of $\overline{G}=\overline{K}_{2n-1}+ K_{t-1}$ with color green. There is no green copy $tK_{2}$ as well.
\end{proof}

\bigskip
\subsection{$R(tK_{2},C_{2m},C_{4})$}

\begin{thm}[\cite{14}]\label{z}
$b(C_{2m},K_{2,2})=m+1$ for $m\geq 4.$
\end{thm}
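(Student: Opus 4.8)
The plan is to prove the two inequalities $b(C_{2m},K_{2,2})\ge m+1$ and $b(C_{2m},K_{2,2})\le m+1$ separately, where throughout I write $n=m+1$ and regard $K_{2,2}$ as the $4$-cycle $C_4$, so that a blue $K_{2,2}$ is exactly a blue $C_4$. For the lower bound I would exhibit a $2$-coloring of $K_{m,m}$, with parts $X=\{x_1,\dots,x_m\}$ and $Y=\{y_1,\dots,y_m\}$, that avoids both forbidden configurations: color every edge incident with $x_1$ blue and every remaining edge red. The blue graph is then the star $K_{1,m}$, which is a tree and hence contains no $C_4$; the red graph leaves $x_1$ isolated, so it cannot contain $C_{2m}$, which in $K_{m,m}$ would be a Hamiltonian cycle and would therefore have to use $x_1$. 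This shows $b(C_{2m},K_{2,2})>m$.

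For the upper bound I would fix an arbitrary $2$-coloring of $K_{n,n}$ (with parts $A,B$ of size $n=m+1$) having no blue $C_4$, and show that the red graph $R$ contains a copy of $C_{2m}$, that is, a cycle omitting exactly one vertex of $A$ and one of $B$. The governing fact is that the blue graph $G$ is $C_4$-free, so $\sum_{a\in A}\binom{d_G(a)}{2}\le\binom{n}{2}$, and similarly for $B$, since any two vertices on one side have at most one common blue neighbor. From this I would extract two structural consequences: at most one vertex of $A$ can have blue-degree $\ge m$ (two such vertices would share at least two blue neighbors and create a blue $C_4$), and only a bounded number of \emph{heavy} vertices can have blue-degree at least $m/2$. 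Deleting the heaviest vertex $a^{\ast}\in A$ and the heaviest $b^{\ast}\in B$ then reduces the task to finding a red Hamiltonian cycle in the balanced bipartite graph $R'=R-a^{\ast}-b^{\ast}$ on the board $K_{m,m}$.

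To produce that Hamiltonian cycle I would invoke a Moon--Moser/Ore-type criterion for balanced bipartite graphs: $R'$ is Hamiltonian provided $d_{R'}(a)+d_{R'}(b)\ge m+1$ for every non-adjacent (hence blue) pair $a,b$ of $R'$. Since $d_{R'}(a)\ge m-d_G(a)$, this inequality holds automatically for every blue pair with $d_G(a)+d_G(b)\le m-1$, so only blue pairs of large total blue-degree---necessarily involving a heavy vertex---can obstruct the argument. This is the crux and the step I expect to be the main obstacle: the naive minimum-degree bound fails exactly at the heavy vertices, and $C_4$-freeness alone does not forbid two heavy vertices from being blue-adjacent. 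I would therefore deal with the $O(1)$ surviving heavy vertices by hand, using that their blue neighborhoods are large yet (by $C_4$-freeness) pairwise almost disjoint, which forces each heavy vertex to retain many \emph{private} red neighbors; through these a rotation--extension (P\'osa-type) argument can route the cycle so as to absorb the heavy vertices. Combining the dense bulk handled by the degree-sum criterion with this explicit routing of the few heavy vertices yields the desired red $C_{2m}$, giving $b(C_{2m},K_{2,2})\le m+1$. The hypothesis $m\ge 4$ would be used to make the counting bound on heavy vertices effective and to settle a few small-$m$ degeneracies directly.
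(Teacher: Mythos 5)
This statement is not proved in the paper at all: it is quoted verbatim from Rui and Yongqi \cite{14}, whose entire article is devoted to computing $b(C_{2m},K_{2,2})$. So there is no in-paper argument to compare yours against, and your proposal has to stand on its own. Its lower-bound half does: the coloring of $K_{m,m}$ with a blue star at $x_1$ and all other edges red is correct and complete, since a red $C_{2m}$ would have to be a Hamiltonian cycle of $K_{m,m}$ passing through the red-isolated vertex $x_1$, and a star contains no $C_4$. The overall skeleton of the upper bound (exploit $C_4$-freeness of the blue graph to control blue degrees, then find a red Hamiltonian cycle on an $m+m$ sub-board) is also a reasonable line of attack.

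The genuine gap is exactly where you say you expect it: the absorption of the heavy vertices is asserted, not proved. The pairwise constraint you get from $C_4$-freeness is that any two vertices of the same part have blue degrees summing to at most $m+2$ (otherwise they share two common blue neighbours), so after deleting the heaviest vertex of each part the surviving vertices can still have blue degree as large as roughly $m/2+1$, i.e.\ red degree in $R'$ only about $m/2-1$. For a blue (non-red-adjacent) pair $a,b$ of such vertices the Moon--Moser sum $d_{R'}(a)+d_{R'}(b)$ can fall to about $m-2$, strictly below the required $m+1$, and nothing in the $C_4$-free hypothesis prevents two such medium-heavy vertices from being blue-adjacent. Your proposed remedy --- that each heavy vertex keeps ``many private red neighbours'' through which a P\'osa rotation--extension routine can route the cycle --- is precisely the content of the theorem and is nowhere carried out; note also that a vertex of $R'$ could a priori have red degree as low as $1$ after the deletions, which already obstructs Hamiltonicity before any degree-sum criterion is invoked. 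Until that step is made precise (which is essentially what \cite{14} does, by a careful case analysis rather than a one-line degree condition), the upper bound $b(C_{2m},K_{2,2})\le m+1$ is not established.
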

\begin{thm}[\cite{30}]
$R(tK_{2},C_{n})=max \lbrace n+2t-1- \lfloor n/2 \rfloor, n+t-1  \rbrace$ for $n\geq 3.$
\end{thm}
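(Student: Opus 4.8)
The plan is to prove the two bounds separately, with the lower bound coming from two explicit $2$-colourings and the upper bound from an analysis of a maximum red matching. Throughout write $N=\max\{n+2t-1-\lfloor n/2\rfloor,\, n+t-1\}=\max\{\lceil n/2\rceil+2t-1,\, n+t-1\}$, and colour the edges of $K_N$ with red and blue, the goal being to force a red $tK_2$ or a blue $C_n$.

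For the lower bound I would exhibit two colourings on $N-1$ vertices avoiding both targets, one for each term of the maximum. For the term $n+t-1$: split the $n+t-2$ vertices into $S$ with $|S|=t-1$ and $T$ with $|T|=n-1$, colour the edges inside $T$ blue and every edge meeting $S$ red. The red edges are all covered by $S$, so the red matching number is at most $t-1$ and there is no red $tK_2$; the blue graph lives on $T$ with only $n-1<n$ vertices, so there is no blue $C_n$. For the term $\lceil n/2\rceil+2t-1$: take $S$ to be a red clique $K_{2t-1}$ and $T$ a set of $\lceil n/2\rceil-1$ vertices, and colour everything except the edges inside $S$ blue. Since $K_{2t-1}$ has matching number $t-1$ there is no red $tK_2$, and in the blue graph $S$ is independent, so any blue cycle uses at least as many $T$-vertices as $S$-vertices; hence its length is at most $2|T|=2\lceil n/2\rceil-2<n$, ruling out $C_n$. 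Together these give $R(tK_2,C_n)\ge N$.

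For the upper bound, suppose a colouring of $K_N$ has no red $tK_2$, and fix a maximum red matching $M$ with $\nu=|M|\le t-1$. Let $U=V(K_N)\setminus V(M)$; by maximality $U$ spans no red edge, so $U$ induces a blue clique, and one checks $|U|=N-2\nu\ge\lceil n/2\rceil+1$ in both regimes of the maximum. The key structural step is: for each matched pair $\{a_i,b_i\}$ the red neighbours of $a_i$ and $b_i$ inside $U$ together number at most one vertex, since otherwise two distinct such neighbours $u,u'\in U$ would let us replace $a_ib_i$ by the red edges $a_iu,b_iu'$ and enlarge $M$. Hence a chosen representative $g_i\in\{a_i,b_i\}$ is blue-adjacent to all but at most one vertex of $U$.

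It then remains to build a blue cycle of length exactly $n$ from the blue clique $U$ and the representatives $g_1,\dots,g_\nu$. If $|U|\ge n$ I simply take $C_n$ inside the clique; otherwise I must insert $q=n-|U|$ of the $g_i$ into the clique cycle. Here one verifies $q\le\nu$ (from $N\ge n+t-1\ge n+\nu$), so enough representatives are available, and $q<n/2\le|U|$. Arranging $U$ cyclically, each $g_i$ is insertable into all but at most two gaps, and since $q$ is small relative to $|U|$ Hall's condition holds, so the $g_i$ can be assigned to distinct admissible gaps and inserted each between two of its blue neighbours. The resulting blue cycle passes through all of $U$ and the $q$ chosen vertices, hence has length $|U|+q=n$, giving the desired blue $C_n$. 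The main obstacle is precisely this insertion argument: making the exact-length count and Hall's condition go through in every regime of $t$ relative to $\lfloor n/2\rfloor$, together with the small-$|U|$ and small-$n$ boundary cases, is where care is needed, whereas the structural matching claim and the two lower-bound colourings are comparatively routine.
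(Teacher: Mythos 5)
The paper offers no proof of this statement: it is quoted verbatim as a known theorem of Faudree, Schelp and Sheehan (reference [30]), so there is nothing internal to compare your argument against. Judged on its own, your proof is essentially sound and self-contained: the two lower-bound colourings are correct (the parity check $2\lceil n/2\rceil-2<n$ works for both parities of $n$), and the upper-bound scheme --- maximum red matching $M$, blue clique $U=V\setminus V(M)$ with $|U|=N-2\nu\ge\lceil n/2\rceil+1$, representatives inserted into gaps of a cyclic ordering of $U$ via Hall's condition --- goes through, since $q=n-|U|\le\min\{\nu,\lfloor n/2\rfloor-1\}$ and each representative excludes at most two of the $|U|\ge q+2$ gaps.

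One intermediate claim is misstated, though in a repairable way. You assert that for a matched pair $\{a_i,b_i\}$ the red neighbours of $a_i$ and $b_i$ in $U$ \emph{together} occupy at most one vertex. That is false: $a_i$ could have many red neighbours in $U$ while $b_i$ has none, and no augmentation is possible. The correct dichotomy from maximality of $M$ is: either some endpoint of the edge has \emph{no} red neighbour in $U$ (choose it as $g_i$), or both endpoints have red neighbours in $U$, in which case all of these coincide in a single vertex (otherwise distinct $u,u'$ give the augmentation $a_iu$, $b_iu'$), and either endpoint may serve as $g_i$. The conclusion you actually use downstream --- that a representative $g_i$ can be chosen blue-adjacent to all but at most one vertex of $U$ --- survives unchanged, so the proof stands once this sentence is corrected.
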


\begin{thm}For positive integer $m\geq 4$
\begin{itemize}
\item[(1)]  If $t \geq m+1$, $R(tK_{2},C_{2m},C_{4})=m+2t$.
\item[(2)] If $t \leq m$, $2m+t \leq R(tK_{2},C_{2m},C_{4})\leq 2m+t+1$.
\end{itemize}
\end{thm}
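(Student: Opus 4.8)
The plan is to derive both upper bounds from the bipartite-Ramsey machinery already assembled and both lower bounds from the known two-color value $R(tK_2,C_{2m})$ together with a star construction that is automatically $C_4$-free. First I observe that $C_4=K_{2,2}$, that both $C_{2m}$ and $C_4$ are bipartite, and that by Theorem~\ref{z} we have $b:=b(C_{2m},C_4)=b(C_{2m},K_{2,2})=m+1$ for $m\geq 4$. Feeding this into Theorem~\ref{c} settles the upper bounds at once: when $t\geq m+1=b$ we are in the regime $t\geq b$, so $R(tK_2,C_{2m},C_4)\leq b+2t-1=m+2t$; when $t\leq m<b$ we are in the regime $t\leq b$, so $R(tK_2,C_{2m},C_4)\leq 2b+t-1=2m+t+1$.

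For the lower bounds I would use the formula of \cite{30}, namely $R(tK_2,C_{2m})=\max\{m+2t-1,\,2m+t-1\}$, which equals $m+2t-1$ when $t\geq m+1$ and equals $2m+t-1$ when $t\leq m$; write $R$ for this value in each case. I take a $2$-coloring (blue/red) of $K_{R-1}$ with no blue $tK_2$ and no red $C_{2m}$, which exists since $R-1<R$, adjoin a single new vertex, and color all edges incident to it green, so that the underlying graph is $G=K_{R-1}\cup\overline{K}_1$ on $R$ vertices and the green subgraph is the star $K_{1,R-1}$. A star is acyclic and in particular contains no $C_4$, so this is a valid $3$-coloring of $K_R$ avoiding a monochromatic $tK_2$, $C_{2m}$, and $C_4$ in the respective colors; hence $R(tK_2,C_{2m},C_4)\geq R+1$. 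Plugging in $R$ gives $R(tK_2,C_{2m},C_4)\geq m+2t$ in case (1) and $\geq 2m+t$ in case (2). Combined with the upper bounds, case (1) collapses to the exact value $m+2t$, while case (2) yields the stated range $2m+t\leq R(tK_2,C_{2m},C_4)\leq 2m+t+1$.

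The step I expect to be decisive, and the reason case (2) is left as an interval rather than an equality, is the mismatch between the two generic bounds there. The reduction through Theorem~\ref{c} loses a unit because $b=m+1$ is one more than $m$, so its upper bound is $2m+t+1$, whereas the star trick only produces the cheapest possible $C_4$-free green graph and so gains just a single vertex over $R(tK_2,C_{2m})$, giving $2m+t$. One cannot patch the construction by adjoining two or more isolated vertices: as soon as $G=K_{R-1}\cup\overline{K}_c$ with $c\geq 2$, the green join $\overline{K}_{R-1}+K_c$ already contains a $C_4$, formed by any two of the new vertices together with any two old vertices. Closing the gap would therefore require either a direct structural analysis of arbitrary $3$-colorings of $K_{2m+t}$ to sharpen the upper bound to $2m+t$, or a genuinely denser $C_4$-free green color class built on $2m+t$ vertices while keeping the matching number of blue below $t$ and the red graph $C_{2m}$-free; neither is delivered by the tools used here, which is exactly why the theorem records only the two-sided estimate in case (2).
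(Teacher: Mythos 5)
Your proof is correct and takes essentially the same route as the paper: the upper bounds come from Theorem~\ref{c} combined with $b(C_{2m},K_{2,2})=m+1$ from Theorem~\ref{z}, and the lower bounds come from an extremal $2$-coloring of $K_{R-1}$ with $R=R(tK_2,C_{2m})$ plus one extra vertex whose green edges form a $C_4$-free star. Your added remarks on why the gap in case (2) resists this method are sensible commentary but not needed for the statement as recorded.
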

\begin{proof}(1) By Theorems \ref{c} and \ref{z} we have $R(tK_{2},C_{2m},C_{4})\leq b+2t-1=m+2t$ where $b=b(C_{2m},K_{2,2})$ and $t \geq m+1$. To see the lower bound, consider the graph $G=K_{R-1}\cup K_{1}$ so that $R=R(tK_{2},C_{2m})=m+2t-1$ for $t \geq m+1$.
It is clear that there is a two coloring (blue-red) of $E(K_{R-1})$ such that there is no blue copy of $tK_{2}$ and no red copy of $C_{2m}$. We take this coloring. Next we color the edges of $\overline{G}=\overline{K}_{R-1}+ \overline{K}_{1}$ by green. So there is no green copy of $C_{4}$ as well.

\medskip 
(2) By Theorem \ref{c} we have $R(tK_{2},C_{2m},C_{4})\leq 2b+t-1$ where $b=b(C_{2m},K_{2,2})$ and $t\leq m+1$. By Theorem \ref{z} we obtain $R(tK_{2},C_{2m},C_{4})\leq 2m+t+1$. To see the lower bound, consider the graph $G=K_{R-1}\cup K_{1}$ so that $R=R(tK_2,C_{2m})=2m+t-1$.
It is clear that there is a two coloring (blue-red) of $E(K_{R-1})$ such that there is no blue copy of $tK_{2}$ and no red copy $C_{2m}$. We take such a coloring and next we color the edges of $\overline{G}=\overline{K}_{R-1}+ \overline{K}_{1}$ with green. There is no green copy of $C_4$ as the proof is complete.
\end{proof}

\bigskip

\end{document}